\theoremstyle{plain}
\newtheorem{thm}{Theorem}[section]
\newtheorem{lem}[thm]{Lemma}
\theoremstyle{definition}
\newtheorem{exam}{Example}
\newtheorem{rem}{Remark}
\newcommand{\R}{\mathbb R}
\newcommand{\Z}{\mathbb Z}
\begin{document}

\title [\ ] {On $3$-manifolds with locally standard $(\Z_2)^3$-actions}

\author{Zhi L\"u}
\address{Institute of Mathematics, School of Mathematical Sciences, Fudan
 University, Shanghai, 200433, P.R.China.}
\email{zlu@fudan.edu.cn}
\author{Li Yu}
\address{Department of Mathematics and IMS, Nanjing University, Nanjing, 210093, P.R.China}
\email{yuli@nju.edu.cn}

\thanks{Supported by grants from NSFC (No. 10671034).
 }


\keywords{Locally standard action, manifold with corners,
$\Z_2$-homology 3-sphere.}

\subjclass[2000]{57M60, 57M50, 57S17}

\begin{abstract}
   As a generalization of Davis-Januszkiewicz theory, there
is an essential link between locally standard $(\Z_2)^n$-actions (or
$T^n$-actions) actions and nice manifolds with corners, so that a
class of nicely behaved equivariant cut-and-paste operations on
locally standard actions can be carried out in step on nice
manifolds with corners. Based upon this, we investigate what kinds
of closed manifolds admit
    locally standard $(\Z_2)^n$-actions; especially for the $3$-dimensional
    case.
   Suppose $M$ is an orientable closed connected $3$-manifold.  When $H_1(M;\Z_2)=0$, it is shown
   that $M$ admits a locally standard $(\Z_2)^3$-action if and only
   if $M$ is homeomorphic to a connected sum $\overset{\mathrm{8\ copies}}{\overbrace{N \# N \# \cdots \# N
   }}$ of 8 copies of some $\Z_2$-homology sphere $N$, and if further assuming $M$ is
   irreducible, then $M$ must be homeomorphic to $S^3$.
    In addition, the argument is extended to rational homology $3$-sphere $M$ with
    $H_1(M;\Z_2) \cong \Z_2$ and an additional assumption that the $(\Z_2)^3$-action
     has a fixed point.
\end{abstract}

\maketitle

 \section{Introduction}\label{s1}

  During the last two decades, the theory of toric varieties has
  produced a strong pervasion among many mathematical areas, such as
  symplectic geometry, commutative algebra, toric topology etc.,
  because toric varieties admit equivalent descriptions arising naturally in those
  areas. For example, see~\cite{BP02}, \cite{Si01}.  In $1991$, Davis and Januszkiewicz~\cite{DaJan91} introduced and studied two kinds of
  topological versions of toric varieties: small covers and (quasi-) toric
  manifolds. These two kinds of $G$-manifolds ($G=(\Z_2)^n$ or $T^n$) have two special
  properties: the group action is locally standard and the orbit
  space is a simple convex polytope.
Generally, the local standardness of the $G$-action on $M$ makes
sure that the orbit space $Q=M\slash G$ is a nice manifold with
corners. Furthermore, when $Q$ is assumed to be a simple convex
polytope $P$, as shown in~\cite{DaJan91}, a lot of important
algebraic topological invariants for $M$ can be completely computed
in terms of combinatorics of the polytope $P$.
 Indeed, let $\pi: M \longrightarrow P$
be a small cover or a quasi-toric manifold over a simple convex
polytope $P$. Then there is a natural map (called characteristic
function) $\lambda$ defined on facets of $P$, so that up to
equivariant homeomorphism, $M$ can be recovered from the pair
$(P,\lambda)$ in a canonical way (called \textit{glue-back
construction}). As a result, the topological invariants of $M$ are
also completely decided by $(P,\lambda)$.

\vskip .2cm

 The glue-back construction of $M$ provides an effective way of
studying small covers and quasi-toric manifolds. Following this
idea, a description of topological types of all $3$-dimensional
small covers is given in~\cite{LuYu07} by using six kinds of
cut-and-paste operations.

\vskip .2cm

 Moreover, it is shown in~\cite{MaLu08} that the glue-back
construction can be carried out in a by far wider class than simple
convex polytopes. Actually, if we only assume the $G$-action on $M$
is local standard, the orbit space would be a nice manifold $Q$ with
corners which also admits a characteristic function $\lambda$ when
$\partial Q\neq \varnothing$. However, in this more general setting,
$(Q,\lambda)$ is not enough to recover the manifold $M$. Actually we
 need an additional data
--- a principal bundle over $Q$. Then, as shown in~\cite{MaLu08}, up
to equivariant homeomorphism $M$ can be recovered from the
characteristic function $\lambda$ and a principal bundle $\xi$ over
$Q$, and moreover,  a necessary and sufficient condition for two
such manifolds over $Q$ to be equivariantly homeomorphic is given in
terms of $\lambda$ and $\xi$.

\vskip .2cm

 In addition, it is well known that a general effective action may have very
complicated orbit types and the structure of fixed point set, which
are very difficult to be visualized. However, as a special kind of
effective actions, locally standard actions can be well understood
by the glue-back construction, because the structure of orbit types
and fixed point set can be clearly visualized from the manifold with
corner structure of the orbit space. Now, a natural question is

\vskip .2cm

\begin{enumerate}
\item[\textbf{($\star$)}] {\em What kinds of closed manifolds admit
     locally standard $G$-actions?}
\end{enumerate}

\vskip .2cm

The purpose of this paper is to investigate the question $(\star)$
for $G=(\Z_2)^3$, so we restrict our attention to locally standard
$({\Bbb Z}_2)^3$-actions. We shall see that although closed
$3$-manifolds admitting locally standard $({\Bbb Z}_2)^3$-actions
form a wider class than $3$-dimensional small covers, there are
actually very few such irreducible $3$-manifolds with small first
${\Bbb Z}_2$-homology. We first consider $\Z_2$-homology
  $3$-spheres (i.e., closed connected 3-manifolds $M^3$ with
  $H_1(M^3;\Z_2)=0$), and obtain the following result.
 \begin{thm} \label{main1}
     Suppose that $M$ is a  $\Z_2$-homology
  $3$-sphere. Then $M$ admits a locally standard $(\Z_2)^3$-action if
  and only if $M$ is homeomorphic to a connected sum $\overset{\mathrm{8\ copies}}{\overbrace{N \# N \# \cdots \# N
         }}$ of 8 copies of some $\Z_2$-homology sphere $N$. In particular, if
    $M$ is  irreducible and admits
    a locally standard $(\Z_2)^3$-action, then $M$ must be homeomorphic to $S^3$.
   \end{thm}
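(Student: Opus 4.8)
The plan is to reduce the whole problem to the orbit data and then analyze it with Smith theory. By the glue-back correspondence of \cite{MaLu08}, a locally standard $(\Z_2)^3$-action on $M$ is encoded by its orbit space $Q$ (a nice $3$-manifold with corners), a characteristic function $\lambda$ on the facets of $Q$, and a principal bundle $\xi$; connectivity of $M$ together with effectiveness forces the characteristic vectors to span $(\Z_2)^3$. For the ``if'' direction I would first put the standard linear $(\Z_2)^3$-action on $S^3\subset\R^4$ by sign changes on three of the coordinates: this is locally standard, has exactly two fixed points $(0,0,0,\pm1)$, and its orbit space is a $3$-ball whose boundary $S^2$ carries the ``orange-slice'' cell structure with $2$ vertices, $3$ edges, and $3$ bigonal facets labelled $e_1,e_2,e_3$. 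Then, choosing a principal (free) orbit $\mathcal O$ of eight points and performing an equivariant connected sum that glues a copy of $N\setminus\mathrm{int}(B^3)$ into a small invariant ball around each point of $\mathcal O$, I obtain $S^3\#^8N=\#^8N$ with an induced locally standard action (the surgery takes place in the free part, so local standardness is preserved), and $\#^8N$ is a $\Z_2$-homology sphere because $N$ is.

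For the converse I would run Smith theory on the involutions of $(\Z_2)^3$. Since $M$ is a mod-$2$ homology $S^3$, each nonzero $v\in(\Z_2)^3$ has fixed set $M^v$ a mod-$2$ homology sphere (possibly empty). Every characteristic direction occurs as the label of a facet, so for such $v$ the set $M^v$ contains a $2$-dimensional piece and is therefore a single $2$-sphere $S^2_v$; intersecting, the loci $M^{\langle e_i,e_j\rangle}$ should come out as circles and the total fixed set $M^{(\Z_2)^3}$ as a mod-$2$ homology $S^0$, i.e.\ exactly two points $p,q$. Cutting $M$ along the coordinate spheres then exhibits $M$ as eight free ``octants,'' permuted simply transitively by the group, and the closure of one octant maps homeomorphically onto $Q$. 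Because $S^2_{e_1},S^2_{e_2},S^2_{e_3}$ meet pairwise in circles through $\{p,q\}$ and triply only in $\{p,q\}$, the two meridian circles on each sphere cut it into four bigons, so $\partial Q\cong S^2$ with exactly the orange-slice structure and the standard characteristic function $e_1,e_2,e_3$.

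It then remains to reconstruct $M$. Capping the boundary sphere of $Q$ with a $3$-ball produces a closed $3$-manifold $N:=Q\cup_{S^2}B^3$, so that $Q\cong N\setminus\mathrm{int}(B^3)$; reassembling the eight octants along the coordinate spheres is precisely the gluing that yields $S^3$ when $Q=B^3$, and replacing each octant ball by $N\setminus\mathrm{int}(B^3)$ connect-sums one copy of $N$ into each octant, so $M\cong\#^8N$. From $H_1(M;\Z_2)=\bigoplus_8 H_1(N;\Z_2)=0$ I read off that $N$ is a $\Z_2$-homology sphere. Finally, if $M$ is irreducible then the decomposition $M=\#^8N$ must be trivial, since a connected sum of two summands each different from $S^3$ is reducible; hence $N\cong S^3$ and $M\cong S^3$, which is the ``in particular'' clause.

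The step I expect to be the main obstacle is pinning down that the equivariant picture is exactly the standard one. Smith theory by itself gives only that the individual fixed sets are spheres; to guarantee that there are precisely two fixed points, that only three characteristic directions occur (so $\partial Q$ is the three-bigon sphere rather than a more finely subdivided surface), and that the bundle $\xi$ is trivial (so the free part really has eight components rather than being connected by nontrivial holonomy), I will need the full force of the homology-sphere hypothesis. Concretely I would compute $H_*(M;\Z_2)$ from the data $(Q,\lambda,\xi)$ as in \cite{MaLu08}, or invoke equivariant formality of locally standard actions together with the Borel formula for $(\Z_2)^3$-actions on mod-$2$ spheres, and show that any extra characteristic direction, extra handle in $\partial Q$, or nontrivial holonomy of $\xi$ would force $H_1(M;\Z_2)\neq0$. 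This homological rigidity, which forces the minimal standard configuration, is the crux of the argument.
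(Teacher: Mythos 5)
Your outline has the right global skeleton (orbit space with coloring, fixed spheres, the orange-slice boundary, cut and reassemble into $\#^{8}N$), and one piece of it is genuinely nice: deriving from Smith theory alone that each $\mathrm{Fix}(\tau)$ is a single $S^2$ (a mod-$2$ homology sphere with a $2$-dimensional part must be one $2$-sphere) and that $M^{(\Z_2)^3}$ is a mod-$2$ homology $S^0$, i.e.\ exactly two points, is sound and actually streamlines the paper, which gets the single-$S^2$ statement from Borel's formula (Theorem~\ref{Borel}) plus the Lefschetz count (Theorem~\ref{DieckHuang}) and proves connectivity of $\partial Q$ by a separate cut-and-paste argument (Claim 2); your two global fixed points, combined with the fact that each boundary component carries exactly two vertices, give $k=1$ immediately. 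One small but real omission: you never exclude a free action. If the action were free, $Q$ would be closed, there would be no facets and no $\lambda$, and your entire fixed-sphere analysis would be vacuous; the paper disposes of this at the outset via Theorem~\ref{Borel}, which rules out free $(\Z_2)^3$-actions on a $\Z_2$-homology $3$-sphere. This is easily patched, but it must be said.

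The serious gap is the step you yourself flag as the crux: the assertion that cutting $M$ along the three coordinate spheres exhibits eight free octants permuted simply transitively, with one octant mapping homeomorphically onto $Q$. That assertion is precisely the statement that the principal $(\Z_2)^3$-bundle $\xi$ restricted to $\mathrm{Int}\,Q$ is trivial; with nontrivial monodromy, $\pi^{-1}(\mathrm{Int}\,Q)$ has $1$, $2$, or $4$ components, and $M$ is then copies of a manifold joined to an $S^3$-piece by $S^2\times I$ tubes rather than a connected sum of eight copies of $N$. The tools you propose for closing this do not work as stated: locally standard $(\Z_2)^n$-actions are not equivariantly formal in general, and \cite{MaLu08} supplies an Euler-characteristic identity (the formula $\chi(\Sigma)=4\chi(F)-m$ quoted in the paper) rather than a computation of $H_1(M;\Z_2)$ from $(Q,\lambda,\xi)$. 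The paper closes the gap geometrically: it caps $\partial Q$ with the solid three-edged football $\mathbf{B}$ by an equivariant cut-and-paste, so that the resulting $M'$ is a free $(\Z_2)^3$-cover of a closed $Q'$ having $1$, $2$, $4$ or $8$ components, then recovers $M$ from $M'$ and $S^3$ by equivariant connected sum along a free orbit of eight balls; a Mayer--Vietoris count shows every case with fewer than eight components creates extra tubes and hence $H_1(M;\Z_2)\neq 0$ (for instance, a connected $M'$ yields eight tubes to $S^3$ and $\mathrm{rank}\,H_1(M;\Z)\geq 7$), leaving only the eight-component case, which gives $M=Q'\#\cdots\#Q'$. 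Alternatively you could argue directly that every embedded $2$-sphere in $M$ separates (since $H_2(M;\Z_2)=0$) and induct through the three fixed spheres to produce the octants; but some such argument must be supplied --- as written, the eight-octant claim is assumed exactly where it needs to be proven.
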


 Although there are infinitely many irreducible $\Z_2$-homology
  $3$-spheres, Theorem~\ref{main1} asserts that among them only $S^3$ admits a locally standard
  $(\Z_2)^3$-action.

 \begin{rem}
   There is an open conjecture saying that an irreducible homotopy $3$-sphere which
   admits an involution must be homeomorphic to
   $S^3$ (see Kirby's list~\cite[Problem 3.1(E)]{Kirby78}). In our case, we actually show that
    an irreducible homotopy $3$-sphere which
   allows a locally standard $(\Z_2)^3$-action must be homeomorphic to
   $S^3$. So it should be interesting to study the existence
    of locally standard $(\Z_2)^3$-actions on a homotopy $3$-sphere.
 \end{rem}

We also consider orientable
    rational homology $3$-spheres $M$ with $H_1(M;\Z_2) \cong \Z_2
    $, and get the following.

 \begin{thm} \label{main2}
      Suppose that $M$ is an orientable
    rational homology $3$-sphere with $H_1(M;\Z_2) \cong \Z_2 $. Then $M$ admits a locally standard
    $(\Z_2)^3$-action  having a fixed point if and only if $M$ is
    homeomorphic to a connected sum $\R P^3 \# \overset{\mathrm{8\ copies}}{\overbrace{N \# N \# \cdots \# N }}
     $ of a real projective 3-space $\R P^3$ and 8 copies of some  $\Z_2$-homology sphere $N$.
    \end{thm}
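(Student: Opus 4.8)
The plan is to prove both implications by reducing to Theorem~\ref{main1}, the mechanism being that the single extra $\Z_2$ in $H_1(M;\Z_2)$ is carried by one equivariant $\R P^3$-summand which can be split off by an invariant $2$-sphere. For the sufficiency, I would first note that $\R P^3$ carries a locally standard $(\Z_2)^3$-action with a fixed point: realize it as the small cover over the $3$-simplex $\Delta^3$ of Davis--Januszkiewicz~\cite{DaJan91}, whose orbit space is the simple polytope $\Delta^3$ and which has four isolated fixed points lying over the four vertices; its $\Z_2$-Betti numbers are $(1,1,1,1)$, so $\R P^3$ is an orientable rational homology $3$-sphere with $H_1(\R P^3;\Z_2)\cong\Z_2$. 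By the sufficiency part of Theorem~\ref{main1}, the $8$-fold connected sum $N\#\cdots\#N$ admits a locally standard $(\Z_2)^3$-action, and the glue-back construction there produces one with a fixed point. Since near a fixed point a locally standard action is equivariantly homeomorphic to the standard representation of $(\Z_2)^3$ on $\R^3$, I can excise an invariant ball around a fixed point of each manifold and glue the two invariant boundary $2$-spheres by an equivariant homeomorphism; this realizes the connected sum $\R P^3\#(N\#\cdots\#N)$ while keeping the action locally standard and retaining a fixed point. On orbit spaces it is exactly the connected sum of nice manifolds with corners at a vertex, compatible with the characteristic data, as used in~\cite{LuYu07, MaLu08}, so the operation is legitimate and the sufficiency follows.

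For the necessity, let $Q=M/(\Z_2)^3$ be the orbit space --- a nice $3$-manifold with corners --- and let $(\lambda,\xi)$ be the characteristic function and the principal $(\Z_2)^3$-bundle that recover $M$ in the sense of~\cite{MaLu08}. The existence of a fixed point forces $\partial Q\neq\varnothing$ together with a vertex of $Q$ at which three facets meet and the values of $\lambda$ form a basis of $(\Z_2)^3$. Because $M$ is an orientable rational homology $3$-sphere with $H_1(M;\Z_2)\cong\Z_2$, Poincar\'e duality and the universal coefficient theorem over the field $\Z_2$ give $\dim_{\Z_2}H_i(M;\Z_2)=1$ for every $i\in\{0,1,2,3\}$; this is exactly one more than a $\Z_2$-homology sphere in each of degrees $1$ and $2$. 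I would feed this count into the $\Z_2$-homology computation underlying the proof of Theorem~\ref{main1} --- the Mayer--Vietoris/glue-back analysis expressing $H_*(M;\Z_2)$ through $Q$, $\lambda$ and $\xi$ --- to conclude that $(Q,\lambda,\xi)$ arises from the data of some $\Z_2$-homology sphere by a single defect, and to locate that defect as an $\R P^3$-piece, i.e. a vertex neighbourhood whose global contribution to the homology coincides with that of the simplex datum of $\R P^3$.

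The final step is to split this piece off. I would choose an invariant $2$-sphere bounding an invariant ball that contains the distinguished vertex-orbit and write $M=\R P^3\# M'$ equivariantly, where $M'$ again carries a locally standard $(\Z_2)^3$-action with a fixed point. Additivity of $\Z_2$-homology under connected sum then gives $H_1(M';\Z_2)=0$, so $M'$ is an orientable $\Z_2$-homology $3$-sphere, and Theorem~\ref{main1} forces $M'\cong N\#\cdots\#N$ ($8$ copies) for some $\Z_2$-homology sphere $N$; hence $M\cong\R P^3\# N\#\cdots\#N$, as required. I expect the main obstacle to be precisely this identify-and-split step: one must show, from the glue-back description, that the extra $\Z_2$ in $H_1(M;\Z_2)$ is \emph{forced} to originate from a single $\R P^3$-summand cut out by an honest invariant sphere --- rather than being spread across the bundle $\xi$ or several faces of $Q$ --- and that the complementary piece $M'$ is again of the type governed by Theorem~\ref{main1}. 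The fixed-point hypothesis is what supplies the vertex at which the cut is performed, and is therefore indispensable.
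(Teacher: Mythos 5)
Your sufficiency argument is fine (it is essentially the construction the paper dismisses with ``obviously,'' done in more detail: equivariant connected sum at fixed points of the small cover $\R P^3\to\Delta^3$ with the action on $N^{\#8}$ from Theorem~\ref{main1}). The necessity direction, however, has a genuine gap, and you have correctly located it yourself: the ``identify-and-split'' step is precisely the content of the theorem, and your proposal offers no mechanism for it. There is no ``Mayer--Vietoris/glue-back analysis expressing $H_*(M;\Z_2)$ through $Q$, $\lambda$ and $\xi$'' in the proof of Theorem~\ref{main1} to feed your Betti-number count into; what the paper actually does at this point is an analysis of the fixed-point sets and of the tessellation of $\partial Q$, none of which appears in your plan. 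Concretely: by Lemma~\ref{L:action2} each $\tau\in\mathrm{Im}\,\lambda$ is an orientation-reversing involution; Kobayashi's inequality (Theorem~\ref{Eq_Kobe}) gives $\dim_{\Z_2}H_1(\mathrm{Fix}(\tau);\Z_2)\le 1$, the Lefschetz result (Theorem~\ref{DieckHuang}) gives $\chi(\mathrm{Fix}(\tau))=2$, so $\mathrm{Fix}(\tau)$ is an $S^2$, an $\R P^2$ plus a point, or two points; then the Euler-characteristic formula~\eqref{Eq:EulerNumber} shows the facets of $\partial Q$ are $2$-gons and triangles, and an Euler count forces each boundary component of $\partial Q$ to be either three $2$-gons (the football, corresponding to $S^3$) or the boundary of a tetrahedron (corresponding to $\R P^3$). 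Without this step you have no a priori control on $\partial Q$ --- its components could in principle be higher-genus surfaces with other tessellations --- so the claim that the extra $\Z_2$ is ``carried by one $\R P^3$-piece at a vertex'' is unsupported; note also that the $\R P^3$-summand is detected by a whole boundary component of $Q$ (four triangles), not by a vertex neighbourhood, whose preimage is just a ball regardless.

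Worse, the splitting you propose --- a \emph{single} invariant $2$-sphere realizing $M=\R P^3\# M'$ equivariantly, with $M'$ again carrying a locally standard action to which Theorem~\ref{main1} applies --- does not exist for the actions in question, so this route would fail even if the identification step were granted. In the structure the paper derives, $Q$ is (in the relevant case) a tetrahedron connect-summed in its interior with a closed $Q'$, and the preimage of the interior connect-sum sphere consists of \emph{eight} disjoint $2$-spheres permuted transitively by $(\Z_2)^3$: the $\R P^3$-part attaches to eight copies of $Q'$ minus a ball, so the complement of the $\R P^3$-piece is disconnected and carries no locally standard $(\Z_2)^3$-action of its own (one can check directly that invariant spheres lying over disks in $Q$ with boundary on the tetrahedral component only cut off single vertex-balls). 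The paper instead proceeds globally: it first shows $\partial Q$ is connected (the $k=1$ argument of Theorem~\ref{main1}, using $H_1(M;\Z_2)\cong\Z_2$ and the rank estimate from the tube picture), then fills the unique boundary sphere with a football or tetrahedron via the equivariant cut-and-paste, obtaining a closed $Q'$ whose free cover $M'$ has $1$, $2$, $4$ or $8$ components; the rational-homology-sphere hypothesis forces $8$ components, whence $M=C\#\,8Q'$ with $C\in\{S^3,\R P^3\}$, and the condition $H_1(M;\Z_2)\cong\Z_2$ forces $C=\R P^3$ and $Q'$ a $\Z_2$-homology sphere. Your final homology bookkeeping agrees with this conclusion, but the decomposition that delivers it is the orbit of eight spheres, not one.
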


  This paper is organized as follows. In Section~\ref{s2} we review some basic notions for  locally
standard $(\Z_2)^n$-actions and manifolds with corners, and study
the relationships between  locally standard $(\Z_2)^n$-actions and
manifolds with corners. In addition, we introduce the general
glue-back construction and the equivariant cut-and-paste operation.
 In Section~\ref{s3}, we list some known results on
  closed manifolds with a finite group action.
 Then we finish the proofs of our main results in
 Section~\ref{s4}.  \\

\section{Locally standard $(\Z_2)^n$-actions and equivariant cut-and-paste
operation}\label{s2}

First, we review some basic notions with respect to  locally
standard $(\Z_2)^n$-actions and manifolds with corners, and then
introduce some useful lemmas.

\subsection{Locally standard $(\Z_2)^n$-actions}
A standard representation of $(\Z_2)^n$ on ${\Bbb R}^n$ is the
natural action defined by
$$\big((g_1,...,g_n), (x_1,...,x_n)\big)\longmapsto \big(
(-1)^{g_1}x_1,..., (-1)^{g_n}x_n\big),$$ which exactly fixes the
origin of ${\Bbb R}^n$ and has the positive cone $\R_{\geq 0}^n= \{
(x_1,\cdots,x_n)\in \R^n
  \ |\ x_i\geq 0 , i=1,\cdots, n  \}$ as
its orbit space.
    An effective action of $(\Z_2)^n$ on an $n$-dimensional closed manifold $M^n$
     is said to be \textit{locally standard} if it locally looks
     like the standard representation of $(\Z_2)^n$ on ${\Bbb R}^n$;
     more precisely, if
     for each point $x$ in $M^n$, there
     is a $(\Z_2)^n$-invariant neighborhood $V_x$ of $x$ such that $V_x$ is
   equivariantly homeomorphic to an invariant open subset of the
   standard $(\Z_2)^n$-representation  on ${\Bbb R}^n$ (see \cite{DaJan91}).

   \vskip .2cm

   Recall from~\cite{Da83}  that an \textit{$n$-manifold with corners} $Q$ is a Hausdorff space together with a maximal
  atlas of local charts onto open subsets of $\R_{\geq 0}^n $
  such that the overlap maps are homeomorphisms of preserving codimension, where
  the codimension $c(x)$ of a point $x=(x_1,\cdots,x_n)$ in $\R_{\geq 0}^n$ is the number of
  $x_i$ that is $0$. Some examples of manifolds with corners are shown in Figure~\ref{p:Corner_Manifolds}.
   \begin{figure}
  \includegraphics[width=0.90\textwidth]{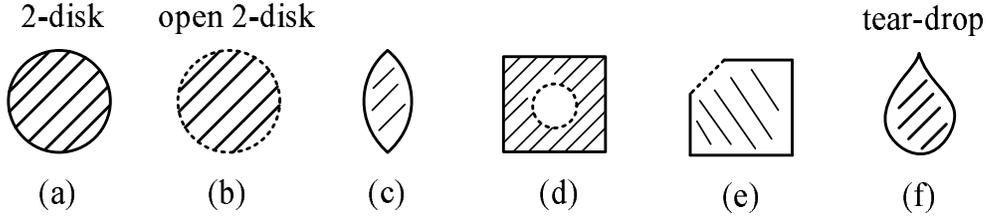}
  \caption{Examples of manifolds with corners}\label{p:Corner_Manifolds}
  \end{figure}
  Generally, a  manifold $Q$ with corners
  may be either compact or non-compact. If  $Q$  is non-compact, then $Q$ is said to be
  an \textit{open} manifold with corners. For example, given a polygon $P^2$,  if we remove a closed 2-disk in its interior
   or cut out a vertex, then the resulting space is an open $2$-manifold with
   corners (see Figure~\ref{p:Corner_Manifolds} (d) and (e)).

\vskip .2cm

  Let $Q$ be a manifold with corners. The \textit{boundary} $\partial Q$ of
  $Q$ is defined as the boundary of $Q$ as a topological manifold.
  Obviously, the boundary of $Q$ is empty if and only if the codimension of each
  point in $Q$ is always zero. Now suppose that $Q$ has non-empty boundary.
  An \textit{open pre-face} of $Q$ of codimension $m$
  is a connected component of $c^{-1}(m)$. A \textit{closed pre-face}
  is the closure of an open pre-face. A closed pre-face of codimension $1$  is called
  a \textit{facet} of $Q$.
  For any $x\in Q$, let $\Sigma(x)$
  be the set of facets which contain $x$.

  \vskip .2cm

  A manifold $Q$ with corners  is said to be \textit{nice} if either $\partial Q$ is empty or $\partial Q$ is non-empty but $\mathrm{Card}(\Sigma(x)) = 2$
  for any $x$ with $c(x) = 2$. It is easy to see that if  an $n$-dimensional nice manifold $Q$ with corners has empty boundary,
  then it is a closed or open topological manifold; otherwise, it has facets, and any $l$-dimensional
  closed pre-face $F^l$ is a component of the intersection of $n-l$
  facets in $Q$, and $\partial Q$ is the union of all
  facets. The Figure~\ref{p:Corner_Manifolds} (f) is not
  nice.
  \vskip .2cm

A subset $K$ of an  $n$-manifold $Q$ with corners is said to be an
$n$-dimensional \textit{ open submanifold with corners}  of $Q$ if
the restriction to $K$ of the atlas of $Q$ makes $K$ become an
 open $n$-manifold with corners in its own right. Obviously,
the intersection of $Q-K$ and the closure $\bar{K}$ is also an
$(n-1)$-manifold with corners, which is called the \textit{section}
of $Q-K$ and is denoted by $S(K)$.

\vskip .2cm Let $Q$ be a nice manifold with corners, and let $K$ be
an open submanifold with corners of $Q$ .  If $S(K)$ is a nice
manifold with corners, then $K$ is said to be a \textit{nice} open
submanifold with corners of $Q$  (see
Figure~\ref{p:open_Submanifold} for examples).

\begin{figure}
         \input{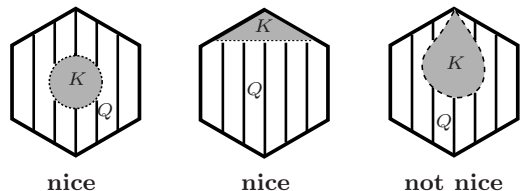}\centering
          \caption{Open submanifold $K$ with corners in a manifold $Q$
           with corners}\label{p:open_Submanifold}
        \end{figure}

\vskip .2cm

  A continuous map between two manifolds with corners
  $f: Q_1 \rightarrow Q_2$ is called a \textit{facial map} if it preserves
  codimension of each point, i.e.,  $c(f(x))=c(x)$ for $\forall\,x\in Q_1$.
  In particular, if $f$ is a homeomorphism, we call it a \textit{facial
  homeomorphism}.

\vskip .2cm
      Suppose $M^n$ is an $n$-dimensional closed manifold with a locally standard
      $(\Z_2)^n$-action. Let $\pi : M^n \rightarrow Q^n$ be the orbit
      map. Then  $Q^n$ is a compact nice
     $n$-dimensional manifold with corners (see also~\cite{Da83} and~\cite{Janich68}).
     In particular, $M^n$ is called a small cover if $Q^n$ is a
    simple convex polytope (see~\cite{DaJan91}).

     \vskip .2cm
    If the boundary of $Q^n$ is empty, then $Q^n$ is a closed manifold,
    so each open neighborhood of any point $x$ in $Q^n$ is identified
     with one in the interior of ${\Bbb R}^n_{\geq 0}$. Since the action
    on $M^n$ is locally standard and $M^n$ is compact, one has that the
    action on $M^n$ must be free, so $\pi : M^n \rightarrow Q^n$ become
     a principal $(\Z_2)^n$-bundle. Conversely, if the action on $M^n$ is
     free, it is easy to see that the boundary of $Q^n$ is empty.

\vskip .2cm

   Now assume that $Q^n$ has non-empty boundary. Then $\partial Q^n$
    is the union of all its facets.
   Similar to the theory of small covers,  each facet $F$ of $Q^n$
    corresponds to a non-zero element $g_F\in (\Z_2)^n$, such that
    $\pi^{-1}(F)$ is fixed by the action of $g_F$ on $M^n$.
    So a characteristic function $\lambda$ on $Q^n$ can be defined as
    \begin{align*}
       \lambda :  \{ \text{facets of}\ Q^n \} &\longrightarrow  (\Z_2)^n \\
                            F  \quad\;   &\longmapsto  \ g_F
    \end{align*}
    satisfying the condition that whenever the intersection $\bigcap_i F_i \neq \varnothing
    $ of some facets is non-empty, all elements of the
    set $\{ \lambda(F_i) \}$ are linearly independent in $(\Z_2)^n$.
    Here $\lambda$ is also called a \textit{$(\Z_2)^n$-coloring} of $Q^n$.

\vskip .2cm Combining the above argument, we have

   \begin{lem} \label{free}
      Suppose that $M^n$ is a closed $n$-manifold admitting a locally standard
      $(\Z_2)^n$-action, and $\pi : M^n \rightarrow Q^n$ is  the orbit
      map. Then $Q^n$ is a compact nice manifold with corners such that
       \begin{enumerate}
       \item[(1)] $\partial Q^n$ is empty if and only if the $(\Z_2)^n$-action on $M^n$ is
       free (i.e.,  $\pi : M^n \rightarrow Q^n$ is a principal
       $(\Z_2)^n$-bundle).
\item[(2)] If $\partial Q^n$ is non-empty, then $Q^n$ admits a $(\Z_2)^n$-coloring on
  its facets.\\
\end{enumerate}
    \end{lem}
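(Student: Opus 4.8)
The plan is to unwind the local standard structure orbit by orbit, since the statement essentially packages the facts established in the preceding discussion. First I would fix, for each $x \in M^n$, an invariant neighborhood $V_x$ equivariantly homeomorphic to an invariant open set of the standard representation on $\R^n$; passing to orbit spaces, $V_x/(\Z_2)^n$ becomes homeomorphic to an open subset of $\R^n_{\geq 0}$. These orbit-space charts assemble into an atlas exhibiting $Q^n$ as an $n$-manifold with corners, the overlaps preserving codimension because an equivariant homeomorphism preserves isotropy groups and the codimension $c(\pi(x))$ equals the rank of the isotropy subgroup $G_x \leq (\Z_2)^n$. Compactness of $Q^n = \pi(M^n)$ is immediate from compactness of $M^n$ and continuity of $\pi$. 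For niceness I would argue at a point $\pi(x)$ with $c(\pi(x)) = 2$: then $G_x \cong (\Z_2)^2$, and in the standard model $\pi(x)$ has exactly two vanishing coordinates, so locally it lies on precisely two branches of the codimension-one stratum. These two branches cannot lie on a single facet, for the element attached to a facet generates a rank-one subgroup while $G_x$ has rank two and is generated by the elements attached to the two branches; hence $\mathrm{Card}(\Sigma(\pi(x))) = 2$, and $Q^n$ is nice.

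For part (1), recall that $\partial Q^n = \varnothing$ if and only if $c \equiv 0$ on $Q^n$, and that a point of codimension zero corresponds to an orbit with trivial isotropy. Thus $\partial Q^n = \varnothing$ exactly when every orbit is free, i.e. when the action is free; in that case $\pi$ is a free action of the finite group $(\Z_2)^n$ on the compact Hausdorff space $M^n$, hence a covering projection and a principal $(\Z_2)^n$-bundle. Conversely, freeness forces every isotropy group, and so every codimension, to be trivial, giving $\partial Q^n = \varnothing$.

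For part (2), assume $\partial Q^n \neq \varnothing$ and let $F$ be a facet, that is, the closure of a connected component of $c^{-1}(1)$. On this open pre-face every point has rank-one isotropy, a single nonzero element of $(\Z_2)^n$; local standardness makes this element locally constant, and connectedness of the component makes it globally constant, so setting $\lambda(F) = g_F$ is well defined. To verify the coloring condition, take facets $F_{i_1}, \dots, F_{i_k}$ with $\bigcap_j F_{i_j} \neq \varnothing$ and a point $x$ lying over a point of this intersection; by the local model $G_x \cong (\Z_2)^k$ is generated by $g_{F_{i_1}}, \dots, g_{F_{i_k}}$, and since $k$ generators of a rank-$k$ group must form a basis, the set $\{\lambda(F_{i_j})\}$ is linearly independent in $(\Z_2)^n$.

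The main obstacle I anticipate is the bookkeeping that converts the purely local standard picture into global statements: proving that the codimension function descends to a genuine manifold-with-corners structure through chart compatibility, that the isotropy element is constant along an \emph{entire} facet rather than merely locally, and that the isotropy group at a corner is generated exactly by the labels of the adjacent facets. Each of these reduces to the explicit standard representation, but care is needed to match the local charts with the global facet decomposition, in particular to rule out a single facet accounting for both local branches at a codimension-two corner, which is precisely the point that secures niceness.
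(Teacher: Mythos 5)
Your proposal is correct and follows essentially the same route as the paper: codimension equals isotropy rank, so empty boundary is equivalent to a free action (and a free action of the finite group $(\Z_2)^n$ on the compact Hausdorff $M^n$ gives a covering, hence a principal bundle), while a nonempty boundary yields the coloring by the rank-one isotropy element constant along each facet, with linear independence read off from the standard local model. The only difference is packaging: where the paper simply cites \cite{Da83} and \cite{Janich68} for the fact that $Q^n$ is a compact nice manifold with corners and asserts the coloring as in small cover theory, you derive the corner atlas, the niceness at codimension-two points, and the well-definedness of $\lambda$ directly from the local charts.
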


            \begin{rem}
If the locally standard $(\Z_2)^n$-action on $M^n$ is not free, then
it is easy to see that the boundary of $Q^n$ together with the
$(\Z_2)^n$-coloring on its facets gives the information of the
non-free orbits, while the interior of $Q^n$ corresponds to all free
orbits.
   \end{rem}

     \begin{lem}\label{L:action2} Suppose that $M^n$ is an orientable
        closed connected manifold admitting  a non-freely locally standard $(\Z_2)^n$-action
        and $\lambda$ is the $(\Z_2)^n$-coloring on its orbit space $Q^n$.  Then
        each
         $\tau \in \text{\rm Im} \lambda$ is  an orientation-reversing involution on $M^n$.
       \end{lem}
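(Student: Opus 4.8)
The plan is to reduce the statement to a purely local computation: I would show that each $\tau=\lambda(F)=g_F$ acts, in an appropriate chart around a point lying over the relative interior of the facet $F$, exactly as a reflection in a single coordinate of $\R^n$, and then propagate the resulting local orientation-reversal over the connected orientable manifold $M^n$. First I would record the preliminaries. Since the action is non-free, Lemma~\ref{free}(1) guarantees $\partial Q^n\neq\varnothing$, so facets exist and $\mathrm{Im}\,\lambda$ consists of non-zero elements of $(\Z_2)^n$; each such $g_F$ satisfies $g_F^2=1$ and, by effectiveness of the action, acts non-trivially on $M^n$, hence is a genuine involution.

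The heart of the argument is the local model. I would choose a point $x\in M^n$ whose orbit lies in the relative interior of $F$, so that its image in $Q^n$ has codimension $1$, and then invoke local standardness to obtain a $(\Z_2)^n$-invariant neighborhood $V_x$ together with an equivariant homeomorphism $\phi\colon V_x\to U$ onto an invariant open subset of the standard representation, say $\phi(g\cdot y)=\theta(g)\cdot\phi(y)$ for some automorphism $\theta$ of $(\Z_2)^n$. Because $g_F$ fixes $\pi^{-1}(F)\cap V_x$ pointwise, the element $\theta(g_F)$ fixes a codimension-one subset of $U$. In the standard representation a non-identity element has fixed locus of codimension equal to its weight, so a codimension-one fixed set forces $\theta(g_F)=e_i$ for a single $i$, acting by the coordinate reflection $x_i\mapsto -x_i$. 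Consequently the self-homeomorphism $y\mapsto g_F\cdot y$ of $V_x$ is conjugate through $\phi$ to this linear reflection, whose local degree at the fixed point is $-1$.

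Finally I would globalize. On a closed connected orientable manifold a homeomorphism acts on $H_n(M^n;\Z)\cong\Z$ by $\pm1$, and this sign equals its local degree at any single point; since the local model above yields local degree $-1$, the involution $\tau=g_F$ must reverse the orientation of $M^n$, which is the assertion. I expect the main obstacle to be the middle step, namely justifying rigorously that $\theta(g_F)$ is a weight-one element and that the induced normal action is precisely multiplication by $-1$ --- equivalently, that the facet $F$ is genuinely the image of a codimension-one fixed submanifold of $\tau$. Everything else reduces to the standard local-degree characterization of orientation behavior together with the bookkeeping supplied by Lemma~\ref{free}.
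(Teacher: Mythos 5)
Your proof is correct, and its geometric core coincides with the paper's: both arguments rest on the fact that $\tau=\lambda(F)$ pointwise fixes the codimension-one closed submanifold $\pi^{-1}(F)$, and that local standardness forces $\tau$ to act near such fixed points as a reflection. Where you diverge is in how this local information is globalized. The paper's (very terse) proof asserts that, since $M^n$ is orientable and the action is locally standard, $\pi^{-1}(F)$ \emph{separates} $M^n$, and concludes from this that $\tau$ is orientation-reversing; note that separation alone would not suffice --- one still needs to know $\tau$ acts as a reflection exchanging the two local sides, which is exactly the local model you make explicit. You instead compute the local degree of $\tau$ at a fixed point, conjugating through the equivariant chart to the linear reflection $x_i\mapsto -x_i$ (local degree $-1$, and local degree is invariant under topological conjugation since the conjugating isomorphisms of $H_n(V,V-x)\cong\Z$ cancel), and then invoke the standard fact that for a homeomorphism of a closed connected orientable manifold the sign of the action on $H_n(M^n;\Z)$ equals the local degree at any point. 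Your route buys something real: it requires only the normal structure along $\pi^{-1}(F)$ near one point, not the global separation claim, which the paper states without justification. Finally, the step you flag as the main obstacle --- that $\theta(g_F)$ has weight one --- is actually immediate from the setup: a point $x$ over the relative interior of the facet $F$ has image of codimension $1$, so $\phi(x)$ has exactly one vanishing coordinate and its isotropy group in the standard representation is the $\Z_2$ generated by a single $e_i$; since $g_F\neq 0$ lies in the isotropy group of $x$ by the very definition of the coloring, $\theta(g_F)=e_i$, and the induced normal action is multiplication by $-1$ by inspection of the standard representation. So your proposal is complete as it stands, and if anything fills in details the paper leaves implicit.
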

\begin{proof} By the definition of $\lambda$, there exists a facet $F$ such that $\tau=\lambda(F)$.
Since $F$ is $(n-1)$-dimensional, $\pi^{-1}(F)$ is an
$(n-1)$-dimensional closed manifold and is fixed by the involution
action of $\tau$ on $M^n$, where $\pi: M^n\longrightarrow Q^n$ is
the orbit map.
        Since the
        $(\Z_2)^n$-action on $M^n$ is locally standard and $M^n$ is orientable, $\pi^{-1}(F)$ separates $M^n$. So $\tau$ must be orientation-reversing.
      \end{proof}

      \begin{rem}
We see from Lemma~\ref{free} and Lemma~\ref{L:action2} that the
local standardness of the action implies many special properties.
Indeed, a general effective action could be much more complicated
than locally standard actions. Here, we give an example of
non-locally standard $(\Z_2)^n$-action below which
     helps us to understand this difference. Consider the
     $(\Z_2)^2$-action  on the unit
     sphere $S^2=\{ (x,y,z)\in \R ^3\ | \ x^2+y^2+z^2=1 \}$ given by
      two commutative involutions $\tau_1$ and $\tau_2$,
  where $\tau_1$ is the reflection of $S^2$ about the $xy$-plane,
     $\tau_2$ is the antipodal map. Obviously,  this  $(\Z_2)^2$-action in
     a small neighborhood of the north pole and the south pole
     is not locally standard.
      \end{rem}

     The following is a typical example for locally standard
$(\Z_2)^n$-actions.
\begin{exam}\label{ex1}
The standard linear action of $(\Z_2)^n$ on $S^n$ defined by
$$\big((g_1,...,g_n), (x_0,x_1,...,x_n)\big)\longmapsto \big(x_0,
(-1)^{g_1}x_1,..., (-1)^{g_n}x_n\big)$$ is locally standard and
fixes the north pole and the south pole of $S^n$, but its orbit
space is not a simple convex polytope except for $n=1$. When $n=3$,
the orbit space of this standard action is the solid three-edged
football \textbf{B}, whose boundary consists of two vertices, three
edges and three $2$-gons.
\end{exam}

\subsection{Reconstruction of locally standard $(\Z_2)^n$-actions}
  Suppose that $M^n$ is a closed manifold with a locally standard
  $(\Z_2)^n$-action, and $\pi: M^n\longrightarrow Q^n$ is its orbit
   map. Assume $Q^n$ is connected in the following discussions.
        \vskip .2cm

        If the boundary of $Q^n$ is empty, then by Lemma~\ref{free} the action on $M^n$ is free, so $\pi: M^n\longrightarrow
        Q^n$ is actually a principal $(\Z_2)^n$-bundle over $Q$. In
        other words, $M^n$ is a $2^n$-fold regular covering space over $Q^n$.
        In this case, there is no canonical way of recovering $M^n$
        from $Q^n$. But up to homeomorphism, $M^n$ corresponds to an
        element of $H^1(Q^n;(\Z_2)^n)$.

        \vskip .2cm

        If $Q^n$ has non-empty boundary, then $Q^n$ admits a
        $(\Z_2)^n$-coloring $\lambda$ decided by the $(\Z_2)^n$ action.
        Generally speaking, as shown in~\cite{MaLu08}, the pair
        $(Q^n, \lambda)$ is not enough for us to recover $M^n$. To do that, one must add another
        data $\xi: E\longrightarrow Q^n$,  which is a principal $(\Z_2)^n$-bundle over $Q^n$.
        This bundle is directly associated with $M^n$ and can be
        produced in the following way: take a facet $F$ of $Q^n$,
        one can obtain a closed submanifold $\pi^{-1}(F)$ of $M^n$.
        The required bundle is given by removing the union of
        small invariant tubular neighborhoods of all these
         $\pi^{-1}(F)$ in $M^n$. In particular,  it is easy to see
         from~\cite{Da83} and~\cite{Janich68} that
        such a bundle is uniquely determined up to isomorphism.

        \vskip .2cm

        Conversely, from the $(\Z_2)^n$-coloring $\lambda$ and the principal bundle
        $\xi$ over $Q$, we can reconstruct  $M^n$
         as follows (also see
        ~\cite{MaLu08}). First, define an equivalence $\sim$ on $E$: for $x_1, x_2\in E$,
        $$x_1\sim x_2\Longleftrightarrow\begin{cases}
        \xi(x_1)=\xi(x_2)\in \text{Int}(Q^n)=Q^n-\partial Q^n\text{ or }
        \\
 \xi(x_1)=\xi(x_2)\in \partial Q^n \text{ and $x_1=gx_2$
for some $g\in G_F$}\end{cases}$$ where $F$ is the closed pre-face
of $Q^n$ such that $\xi(x_1)=\xi(x_2)$ is contained in the relative
interior of $F$ (note that there must be some facets
$F_{i_1},...,F_{i_r}$ of $Q^n$ such that $F$ is a component of the
intersection $F_{i_1}\cap\cdots\cap F_{i_r}$), and $G_F$ is the
subgroup determined by $\lambda(F_{i_1}),...,\lambda(F_{i_r})$.
Next, up to equivariant homeomorphism, the quotient space
$E/\sim=M(\lambda,\xi)$ reproduces $M^n$, and it is called the
\textit{glue-back construction} of $M^n$. In particular, since $Q^n$
is connected and has non-empty boundary, it is easy to see that the
glue-back construction $M(\lambda, \xi)$ is also connected, so is
$M^n$. It should be pointed out that all possible locally standard
$(\Z_2)^n$-actions with $Q^n$ as
orbit space can be constructed in the above way. \\

\begin{exam}
Let $Q$ be the solid three-edged football $\textbf{B}$ in
Example~\ref{ex1}. Clearly $\textbf{B}$ admits a
$(\Z_2)^3$-coloring. Since $\textbf{B}$ is contractible, any
principal $(\Z_2)^3$-bundle over $\textbf{B}$ is always trivial.
Thus, given a $(\Z_2)^3$-coloring $\lambda$ on $\textbf{B}$, up to
equivariant homeomorphism we can always construct a unique closed
3-manifold with a locally standard $(\Z_2)^3$-action such that its
orbit space is $\textbf{B}$. Furthermore, it is easy to check that
any closed $3$-manifold with a locally  standard $(\Z_2)^3$-action
whose orbit space is $\textbf{B}$  is weakly equivariantly
homeomorphic to $S^3$ with the standard $(\Z_2)^3$-action.
\end{exam}

\subsection{Equivariant cut-and-paste operation}
    Suppose that $M_1^n$ and $M_2^n$ are two closed manifolds with non-freely locally standard
    $(\Z_2)^n$-actions, and $Q_1$ and $Q_2$ are their corresponding orbit spaces with $(\Z_2)^n$-colorings $\lambda_1$ and $\lambda_2$ respectively.
    A facial map $f: Q_1 \rightarrow Q_2$ is
    said to be \textit{weak color-matching} if there is an automorphism
    $\sigma : (\Z_2)^n \rightarrow (\Z_2)^n$ such that
     $\sigma(\lambda_2(f(F))) = \lambda_1(F)$ for any facet $F$ of
     $Q_1$. In particular, if $\lambda_2(f(F)) = \lambda_1(F)$ for any facet $F$ of
     $Q_1$, then $f$ is said to be \textit{color-matching}.

     \vskip .2cm

        It is shown in~\cite{LuYu07} that all $3$-dimensional
      small covers can be constructed from some basic ones using
      six equivariant cut-and-paste operations. The common idea of
      these operations is to glue two manifolds
      with locally standard $(\Z_2)^n$-actions
      together along a family of $(\Z_2)^n$-orbits of the same type
      in an equivariant way. Using above definitions, we can make this more precise.

      \vskip .2cm

     Suppose that $K_1$ and $K_2$ are nice
     open submanifolds with corners of $Q_1$ and $Q_2$ respectively
      such that $\pi_1^{-1}(K_1)$ is equivariant homeomorphic to
      $\pi_2^{-1}(K_2)$. Note that by the glue-back construction,
      $\pi_i^{-1}(K_i)$ is an open submanifold of $M^n_i$ ($i=1,2$).
        Then we can construct a closed manifold $M^{\#}$ with a locally standard
      $(\Z_2)^n$-action as follows:

      \vskip .2cm

     Let $\widetilde{g}$ be the
      equivariant homeomorphism from  $\pi_1^{-1}(K_1)$ to
      $\pi_2^{-1}(K_2)$. Obviously, $\widetilde{g}$
       determines an equivariant homeomorphism $\widetilde{g}_1$
       from the boundary of $M_1-\pi_1^{-1}(K_1)$ to that of
       $M_2-\pi_2^{-1}(K_2)$. In the orbit spaces, $\widetilde{g}$
       induces a natural homeomorphism
       $g: K_1\rightarrow K_2$ and $\widetilde{g}_1$ determines
       a homeomorphism $g_1: S(K_1) \rightarrow S(K_2)$. In particular,
       if $K_1$ and $K_2$ have non-empty boundaries, then their facets
      naturally inherit the colorings
     from $Q_1$ and $Q_2$, and $g$ is a color-matching facial homeomorphism
     in this case.  If $S(K_1)$ and $S(K_2)$ have non-empty boundaries, then $g_1$ is also
      a color-matching facial homeomorphism. Now, the required manifold
       $M^{\#}$ is defined as
 \[  M^{\#} = (M_1-\pi_1^{-1}(K_1)) \cup_{\widetilde{g}_1} (M_2-\pi_2^{-1}(K_2)), \]
which is obtained by gluing $M_1-\pi_1^{-1}(K_1)$ and
$M_2-\pi_2^{-1}(K_2)$ along their boundaries via $\widetilde{g}_1$.
 $M^{\#}$ naturally admits a locally standard $(\Z_2)^n$-action.
Correspondingly, its orbit space $Q^{\#}= M^{\#}\slash (\Z_2)^n $
       is
    \[ Q^{\#} = (Q_1-K_1) \cup_{g_1} (Q_2-K_2),  \]
    which is obtained from $Q_1-K_1$ and $Q_2-K_2$ by gluing
    $S(K_1)$ and $S(K_2)$ via $g_1$ (see Figure~\ref{p:Cut_Paste}). In addition, the
     manifold with corners structure and the $(\Z_2)^n$-coloring on $Q^{\#}$ is
     defined by:

    \begin{enumerate}
    \item[(a)]when $S(K_1)$ and
    $S(K_2)$ have non-empty boundaries, for
    each $m$-dimensional pre-face $f_1^m$ of
    $S(K_1)$, let $f_2^m=g(f_1^m) \subset S(K_2)$.
    Then there is a unique
     $(m+1)$-dimensional closed pre-face  $F^{m+1}_1\subset Q_1-K_1$ and $F^{m+1}_2\subset Q_2-K_2$
     such that $ f_1^m\subset \partial F_1^{m+1}$ and $ f_2^m\subset \partial F_2^{m+1}$.
     Then $F_1^{m+1}$ and $F_2^{m+1}$ is glued together
     and define a closed pre-face
     $F^{m+1} = F_1^{m+1}\cup_{g_1} F_2^{m+1}$ in
     $Q^{\#}$ where the sectional face $f_i^m$ is removed (see Figure~\ref{p:Cut_Paste}).
     Clearly, $Q^{\#}$ is also a nice manifold with corners since $S(K_1)$ and
    $S(K_2)$ are nice manifolds with corners. The $(\Z_2)^n$-coloring $\lambda^{\#}$
    on  $Q^{\#}$ can be given in the following way: given a facet $F$ of $Q^{\#}$, if $F$ is a facet of $Q_1-K_1$ (or $Q_2-K_2$), then
    $\lambda^{\#}(F)$ is defined as $\lambda_1(F)$ (or
    $\lambda_2(F)$). If $F$ is the facet determined by two facets $F_1\subset Q_1-K_1$ and $F_2\subset
    Q_2-K_2$, then we define
    $\lambda^{\#}(F)=\lambda_1(F_1)=\lambda_2(F_2)$.
    \vskip .2cm

    \item[(b)] when $S(K_1)$ and
    $S(K_2)$ have empty boundaries, they are in the interiors of
    $Q_1$ and $Q_2$ respectively. In this case,  gluing  $(Q_1-K_1)$ and
    $(Q_2-K_2)$ together via $g_1$
    does not touch the corner structures of $Q_1$ and $Q_2$. Then each facet of $Q^{\#}$
    belongs exactly to one of $Q_1$ or $Q_2$, and the $(\Z_2)^n$-coloring $\lambda^{\#}$
    on $Q^{\#}$ is naturally defined by $\lambda_1$ and $\lambda_2$.

  \begin{figure}
  \includegraphics[width=0.70\textwidth]{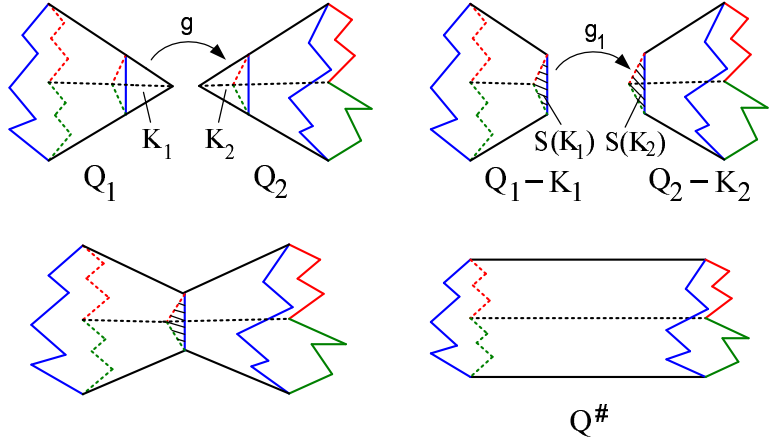}\\
  \caption{Cut-and-Paste operation on orbit spaces}\label{p:Cut_Paste}
  \end{figure}

    \end{enumerate}

    We denote the above procedure by:
    $$(M^{\#}, Q^{\#}, \lambda^{\#})=(M_1, Q_1, \lambda_1)\#_{K_1, K_2} (M_2,
    Q_2, \lambda_2),$$
    and call $(M^{\#}, Q^{\#}, \lambda^{\#})$ the \textit{equivariant
   cut-and-paste operation} of $(M_1, Q_1, \lambda_1)$ and $(M_2,
    Q_2, \lambda_2)$ via $K_1$ and $K_2$.

\vskip .2cm

     As an example, the equivariant connected sum is
      a special equivariant cut-and-paste operation.

\begin{rem}
 Using the glue-back construction,  we can use the
equivariant cut-and-paste operations on the level of orbit spaces to
understand the corresponding equivariant operations on manifolds.
\end{rem}

  \ \\

    \section{Orientation-reversing involution on $3$-manifolds}\label{s3}

 Recall that a connected closed $3$-manifold $M$ is
     called \textit{prime} if $M\approx P \# Q$ implies
     $P\approx  S^3$ or $Q \approx  S^3$.
     $M$ is called \textit{irreducible} if every embedded $2$-sphere in M bounds a $3$-ball
    in $M$. In particular, if $M$ is not a $S^2$-bundle over $S^1$, then $M$ is prime if and only if
    it is irreducible. It is well known that any orientable connected closed $3$-manifold
    $M$ is a connected sum of finitely many prime $3$-manifolds,
      and each connected summand is uniquely determined by $M$ up to homeomorphism.

      \vskip .2cm

      It is well known that not all closed
       $3$-manifolds admit an involution. Indeed, it is shown in~\cite{My81} that there are infinitely
      many irreducible homology $3$-spheres that do not admit any
    \textit{PL}-involutions.

    \vskip .2cm

    On the other hand, if a closed $3$-manifold $M$ is orientable and admits an orientation
   reversing involution $\sigma$, then the fixed point set of $\sigma$
    has some topological restrictions from the homology group of $M$.

  \begin{thm}[Kobayashi~\cite{Koba88}] \label{Eq_Kobe}
    For a closed orientable
  $3$-manifold $M$ which admits an orientation-reversing involution
  $\tau$,
  \begin{equation*}
   \text{dim}_{\Z_2}H_1(\text{Fix}(\tau);\Z_2)\leq\dim_{\Z_2} H_1(M;\Z_2)+\beta_1(M).
   \end{equation*}
   where $\beta_1(M)$ is the first Betti number of $M$.
    \end{thm}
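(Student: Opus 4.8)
The plan is to reduce the statement to a bound on the fixed surface and then to combine mod-$2$ Smith theory with a rational Poincar\'e-duality argument; the essential subtlety is that the orientation-reversing hypothesis is invisible to $\Z_2$-coefficients and so must be extracted rationally. First I would analyze $\text{Fix}(\tau)$ locally: since $\tau$ is an orientation-reversing involution, at a fixed point the differential is an involution of determinant $-1$, so its eigenvalues are of type $(+,+,-)$ or $(-,-,-)$; hence $\text{Fix}(\tau)=\Sigma\sqcup P$ where $\Sigma$ is a closed surface and $P$ a finite set of isolated points. As $H_1(P;\Z_2)=0$ it suffices to bound $s:=\dim_{\Z_2}H_1(\Sigma;\Z_2)$. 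Splitting $\Sigma=\Sigma^o\sqcup\Sigma^n$ into its two-sided (orientable) and one-sided (non-orientable) parts and letting $n$ be the number of components of $\Sigma^n$, one has $s=\dim_{\mathbb{Q}}H_1(\Sigma;\mathbb{Q})+n$, because each non-orientable component $N_g$ satisfies $H_1(N_g;\Z)=\Z^{g-1}\oplus\Z_2$.

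A crude bound comes for free from the total Smith inequality $\sum_i\dim_{\Z_2}H_i(\text{Fix}(\tau);\Z_2)\le\sum_i\dim_{\Z_2}H_i(M;\Z_2)$. Using $\Z_2$-Poincar\'e duality $\dim_{\Z_2}H_2(M;\Z_2)=\dim_{\Z_2}H_1(M;\Z_2)$ on the right and evaluating the left side on $\Sigma\sqcup P$, this rearranges to $s\le 2\dim_{\Z_2}H_1(M;\Z_2)$. This already has the right shape but loses a factor, and the missing refinement is exactly the $2$-torsion of $H_1(M;\Z)$: writing $\dim_{\Z_2}H_1(M;\Z_2)=\beta_1(M)+r$, I must improve $2(\beta_1+r)$ to $\beta_1+(\beta_1+r)$, i.e. save one copy of $r$.

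I would obtain this refinement in two independent pieces. For the rational part, cut $M$ along $\Sigma$ to get a compact orientable $X=M\setminus\text{int}\,N(\Sigma)$ with $\partial X=\hat\Sigma$ the orientation double cover of $\Sigma$; a direct check gives $\dim_{\mathbb{Q}}H_1(\hat\Sigma;\mathbb{Q})=2\dim_{\mathbb{Q}}H_1(\Sigma;\mathbb{Q})$. The half-lives-half-dies lemma on $X$ says the kernel of $H_1(\partial X;\mathbb{Q})\to H_1(X;\mathbb{Q})$ is exactly half-dimensional, and via Mayer--Vietoris this kernel is the image of the connecting map out of $H_2(M;\mathbb{Q})$; since $\dim_{\mathbb{Q}}H_2(M;\mathbb{Q})=\beta_1(M)$ by Poincar\'e duality on the closed orientable $M$, this yields $\dim_{\mathbb{Q}}H_1(\Sigma;\mathbb{Q})\le 2\beta_1(M)$. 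For the torsion part, I would show that each one-sided component of $\Sigma$ contributes an independent $\Z_2$-summand to the torsion of $H_1(M;\Z)$ (its one-sidedness class, detected by a Bockstein or linking-form computation), giving $n\le r=\dim_{\Z_2}H_1(M;\Z_2)-\beta_1(M)$. Adding the two, $s=\dim_{\mathbb{Q}}H_1(\Sigma;\mathbb{Q})+n\le 2\beta_1(M)+r=\beta_1(M)+\dim_{\Z_2}H_1(M;\Z_2)$.

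The hard part will be making both refinement steps uniform and exact. In the rational step the inequality $\dim_{\mathbb{Q}}H_1(\Sigma;\mathbb{Q})\le 2\beta_1(M)$ is transparent when $\Sigma$ separates $M$ (so that $M$ is the double of $M/\tau$ and the connecting map image is literally $\ker(H_1(\Sigma)\to H_1(M/\tau))$), but in the non-separating case one must control $\partial\colon H_2(M;\mathbb{Q})\to H_1(\hat\Sigma;\mathbb{Q})$ carefully; this is precisely where orientation-reversing is used, since $\tau$ acting by $-1$ on $H_3(M;\mathbb{Q})$ is what pins down its action on $H_2$. The torsion step requires the correct one-sided-surface invariant together with independence of the classes from distinct components. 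Finally the isolated points $P$ must be carried through both arguments --- e.g. by excising invariant balls, on whose boundary spheres $\tau$ acts antipodally, producing $\R P^2$ boundary components in the quotient --- which only sharpens the inequalities but adds bookkeeping.
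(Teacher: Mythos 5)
Your preliminary reductions are sound (the local structure of $\mathrm{Fix}(\tau)$, the identity $s=\dim_{\mathbb{Q}}H_1(\Sigma;\mathbb{Q})+n$, and the crude Smith bound $s\le 2\dim_{\Z_2}H_1(M;\Z_2)$), and note at the outset that the paper contains no proof of this theorem --- it is imported from Kobayashi~\cite{Koba88} --- so your argument must stand on its own. It does not, for two reasons. First, a technical error in the rational step: by Mayer--Vietoris, the image of $\partial\colon H_2(M;\mathbb{Q})\to H_1(\widehat\Sigma;\mathbb{Q})$ equals $\ker\bigl(H_1(\widehat\Sigma;\mathbb{Q})\to H_1(X;\mathbb{Q})\oplus H_1(N(\Sigma);\mathbb{Q})\bigr)$, which is in general a \emph{proper} subspace of the half-lives-half-dies kernel $\ker\bigl(H_1(\widehat\Sigma;\mathbb{Q})\to H_1(X;\mathbb{Q})\bigr)$; the comparison therefore yields only $\dim\operatorname{im}\partial\le\dim_{\mathbb{Q}}H_1(\Sigma;\mathbb{Q})$, the reverse of the inequality you need. (For $M=\#_g(S^1\times S^2)$, the double of a handlebody with the swap involution, the half-kernel has dimension $2g$ while $\beta_2(M)=g$, so your identification cannot hold.) Second, and fatally, the intermediate claim $\dim_{\mathbb{Q}}H_1(\Sigma;\mathbb{Q})\le 2\beta_1(M)$ is false: glue the orientable twisted $I$-bundle over the Klein bottle $K$, equipped with the fiberwise reflection fixing the core $K$ and acting on the boundary torus as the deck involution $(u,v)\mapsto(u+\tfrac12,-v)$, to a solid torus $D^2\times S^1$ carrying the involution $(z,w)\mapsto(-z,\bar w)$ (two isolated fixed points), with the meridian attached to the $u$-curve. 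The result is a rational homology sphere (a prism manifold) with $H_1(M;\Z)\cong\Z_2\oplus\Z_2$ and $\beta_1(M)=0$, carrying an orientation-reversing involution with fixed set $K\sqcup\{\text{two points}\}$; here $\dim_{\mathbb{Q}}H_1(\Sigma;\mathbb{Q})=1>0=2\beta_1(M)$, even though Kobayashi's bound ($2\le 2+0$) is sharp. The Klein bottle's rational homology is paid for by the $2$-torsion of $M$, not by $\beta_1$.

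Your torsion step fails symmetrically: gluing the same twisted $I$-bundle to a solid torus carrying the \emph{free} involution $(z,w)\mapsto(\bar z,-w)$, with the meridian now on the $v$-curve, produces a closed orientable $M$ with $H_1(M;\Z)\cong\Z$ (so $r=0$) and an orientation-reversing involution whose entire fixed set is the one-sided Klein bottle, giving $n=1>r=0$. So the one-sidedness class of a fixed component need not contribute $2$-torsion to $H_1(M;\Z)$; here it is absorbed by the free part. In the two examples your budgets are violated in opposite directions while the total inequality remains sharp, which shows that no proof establishing the bounds $\dim_{\mathbb{Q}}H_1(\Sigma;\mathbb{Q})\le 2\beta_1$ and $n\le r$ \emph{separately} can exist --- the rational and torsion contributions genuinely mix, and any correct argument must couple them. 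Kobayashi's short proof does exactly this: it passes to the orbit space $N_0=\bigl(M\setminus\{\text{invariant balls about the isolated fixed points}\}\bigr)/\tau$, a compact and typically non-orientable $3$-manifold whose boundary consists of $\Sigma$ together with projective planes, and applies $\Z_2$-Poincar\'e--Lefschetz duality there (half-lives-half-dies over $\Z_2$ needs no orientability), so that $\dim_{\Z_2}H_1(M;\Z_2)$ and $\beta_1(M)$ enter jointly when the homology of $N_0$ is compared with that of $M$; if you want to salvage your outline, you should work in the quotient with $\Z_2$-coefficients from the start rather than splitting off the rational part.
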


  It is possible that an involution on
   a $3$-manifold has a disconnected fixed point set.
   For example, the fixed point set of the obvious involution on $\R P^3$
   via reflection is the union of an $\R P^2$ and a point.
   \vskip .2cm

   In addition, let us recall some famous theorems on finite group
   actions on manifolds which will be useful in the proofs of
   our main results in section~\ref{s4}.

   \begin{thm}[Borel ~\cite{Borel60}]\label{Borel}
     Suppose that $G=(\Z_p)^k$ acts on an $n$-dimensional $\Z_p$-homology sphere
     $X$. For any subgroup $H$ of $G$, the fixed point set $X^H$ is
     a $\Z_p$-homology sphere of dimension, say, $n(H)$. Then
     $$ n- n(G) = \sum_H\, (n(H) -n(G)), $$ where the sum is over all
     $H$ of corank $1$.\\
   \end{thm}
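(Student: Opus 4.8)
The plan is to reduce this global, topological identity to a single purely representation-theoretic fact, and to bridge the two with Smith theory together with the Borel--Quillen localization theorem. I would begin by observing that both sides of the asserted formula are codimensions measured at the fixed point set $X^G$: the left-hand side $n-n(G)$ is the codimension of $X^G$ in $X$, while each summand $n(H)-n(G)$ is the codimension of $X^G$ inside the intermediate $\Z_p$-homology sphere $X^H$. Thus, morally, one wants to compare the ``normal data'' of $X^G$ in $X$ with the $H$-fixed part of that normal data, as $H$ ranges over the corank-$1$ subgroups (the hyperplanes) of $G$.

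The second step is to isolate the correct linear statement. If $G$ acted linearly on a real representation $V$ with $V^G=0$, the claim would read $\dim V=\sum_{H}\dim V^{H}$, the sum over corank-$1$ subgroups $H$. This is elementary: decompose $V$ into nontrivial real irreducibles. For $p=2$ each irreducible is $1$-dimensional with kernel a hyperplane; for $p$ odd each is $2$-dimensional, and since any nontrivial homomorphism $G\to\Z_p$ is onto, its kernel is again a corank-$1$ subgroup. In either case a nontrivial irreducible $W$ satisfies $W^{H}\neq 0$ exactly when $H\subseteq\ker W$, which for corank-$1$ subgroups forces $H=\ker W$; hence among all hyperplanes precisely one contributes $\dim W$ to the right-hand side. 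Summing over the irreducible constituents of $V$ yields the identity.

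The third step is to transport this to the topological setting. Since $X$ is a $\Z_p$-homology sphere, classical Smith theory guarantees that each $X^H$ is again a $\Z_p$-homology sphere, and that for $p$ odd the codimensions $n-n(H)$ are even. The ``normal representation'' $V$ need not exist topologically, so in its place I would work with equivariant cohomology over $R=H^*(BG;\Z_p)$. By the localization theorem the restriction $H^*_G(X;\Z_p)\to H^*_G(X^G;\Z_p)$ is an isomorphism after inverting the relevant nonzero elements of $R$ (the linear forms for $p=2$, the degree-$2$ polynomial generators for $p$ odd), and the comparison is governed by an equivariant Euler class whose degree records the codimension $n-n(G)$. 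Restricting this picture along each corank-$1$ inclusion $H\hookrightarrow G$ and applying the cyclic Smith--Gysin sequence to the residual $G/H\cong\Z_p$-action on $X^H$ identifies $n(H)-n(G)$ with the $H$-fixed part of that Euler class, matching $\dim V^{H}$ in the linear model; additivity of Euler-class degrees then produces the formula.

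The main obstacle is exactly this last bridge. In the general topological (rather than smooth) category there is no slice theorem furnishing an honest normal representation $V$, so the clean count of the second step must be re-expressed entirely through the $R$-module structure of $H^*_G(X;\Z_p)$ and the behaviour of Euler classes under restriction to the hyperplanes $H$. The delicate points are (i) verifying that the spectral sequence of $X\to X_G\to BG$ degenerates enough that the Euler-class degree genuinely equals the geometric codimension, and (ii) organizing the bookkeeping so that each hyperplane is counted with the correct multiplicity. Once the Euler-class degrees are pinned down, the remaining combinatorics collapses to the elementary representation-theoretic identity above.
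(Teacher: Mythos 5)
You should first note a mismatch with the premise of the exercise: the paper contains no proof of this statement at all. Theorem~\ref{Borel} is quoted verbatim as a classical result, with the citation to Borel's seminar \cite{Borel60} serving in place of an argument, and it is then used as a black box in Section~\ref{s4} (to rule out free $(\Z_2)^3$-actions on a $\Z_2$-homology $3$-sphere). So there is no in-paper proof to compare against; the only baseline is Borel's original argument. Measured against that, your outline does follow the right family of ideas: Smith theory to guarantee each $X^H$ is again a $\Z_p$-homology sphere, the elementary linear identity $\dim V=\sum_H \dim V^H$ for a real $G$-representation with $V^G=0$ (your second step is correct as stated, including the $p=2$ versus $p$ odd dichotomy of irreducibles), and localization of $H^*_G(X;\Z_p)$ over $R=H^*(BG;\Z_p)$ to substitute for the missing normal representation.

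The genuine gap is that your third step is asserted rather than proven, and it is exactly the content of the theorem. The claim that restricting along a hyperplane $H\hookrightarrow G$ and running the Smith--Gysin sequence for the residual $G/H\cong\Z_p$-action ``identifies $n(H)-n(G)$ with the $H$-fixed part of the Euler class'' is the whole difficulty: in the topological category one must construct this Euler class purely from the $R$-module structure of $H^*_G(X;\Z_p)$, prove its total degree equals $n-n(G)$, and prove that the multiplicity of the linear form (for $p=2$) or degree-two class (for $p$ odd) cutting out a given hyperplane $H$ equals $n(H)-n(G)$. These are precisely the points (i) and (ii) you flag as ``delicate,'' and they are not bookkeeping --- they occupy the corresponding chapter of \cite{Borel60}, where the argument proceeds by induction on the rank $k$ with the rank-two case settled by an explicit Smith-sequence computation. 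A further concrete omission: your Euler-class picture tacitly assumes $X^G\neq\varnothing$, whereas the theorem is stated (and used in this paper) with the convention that the empty set is a $(-1)$-sphere, so $n(G)=-1$ is allowed; it is exactly this case that excludes free actions in the proof of Theorem~\ref{main1}, and the localization argument must be run differently there (if $X^G=\varnothing$ then $H^*_G(X^G;\Z_p)=0$ and there is no restriction map to compare degrees with). As it stands, the proposal correctly reduces the theorem to the statements that the cited proof actually establishes, without establishing them.
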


   \begin{thm}[tom Dieck~\cite{Dieck79} and
   Huang~\cite{Huang74}]\label{DieckHuang}
    For a periodic transformation $f$ on a compact smooth manifold, $L(f)=\chi(\text{Fix}
   (f))$, where $L(f)$ is the Lefschetz number of $f$.\\
   \end{thm}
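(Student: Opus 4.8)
The plan is to reduce the computation of the Lefschetz number to a count of fixed simplices in a sufficiently fine invariant triangulation. Since $f$ is periodic, say $f^N=\mathrm{id}$, it generates an action of the finite group $\Z_N$ on $M$. Averaging an arbitrary Riemannian metric over this group makes $f$ an isometry of finite order; in particular $\mathrm{Fix}(f)$ becomes a closed totally geodesic submanifold, with possibly several components of different dimensions. By the equivariant triangulation theorem for smooth finite group actions (due to Illman), $M$ admits an $f$-invariant simplicial structure on which $f$ acts as a simplicial automorphism, and I would use this to compute $L(f)$ simplicially.

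First I would pass to the barycentric subdivision of this invariant triangulation. A standard fact is that after one barycentric subdivision any simplex carried to itself by $f$ is in fact fixed pointwise: an invariant simplex is spanned by the barycenters of a flag $\sigma_0<\cdots<\sigma_k$, and since $f$ preserves dimension it must fix each $\sigma_i$, hence each barycenter. Writing $C_\ast$ for the simplicial chain complex with rational coefficients and $f_\#$ for the induced chain map, the Hopf trace formula gives
\[ L(f)=\sum_i (-1)^i \,\mathrm{tr}\big(f_\#|_{C_i}\big). \]
In the simplex basis a simplex not carried to itself contributes $0$ to the diagonal, while an invariant simplex is now fixed pointwise and so contributes $+1$; hence $\mathrm{tr}(f_\#|_{C_i})$ equals the number of $i$-simplices fixed by $f$.

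Finally I would identify these fixed simplices with a triangulation of $\mathrm{Fix}(f)$. A simplex is fixed pointwise precisely when all of its vertices are fixed, so the fixed simplices form the full subcomplex on the fixed vertex set, which is genuinely a subcomplex. If $x\in\mathrm{Fix}(f)$ lies in the interior of a simplex $\sigma$, then $f(\sigma)$ contains $x=f(x)$ in its interior, forcing $f(\sigma)=\sigma$ and hence (after subdivision) $\sigma\subset\mathrm{Fix}(f)$; thus this subcomplex is exactly $\mathrm{Fix}(f)$. Counting by dimension,
\[ \sum_i(-1)^i\,\#\{\text{fixed } i\text{-simplices}\}=\chi(\mathrm{Fix}(f)), \]
and combining with the previous display yields $L(f)=\chi(\mathrm{Fix}(f))$.

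I expect the only genuine obstacle to be securing an $f$-invariant smooth triangulation, which is exactly where the smoothness of $f$ and the finiteness of its order are essential; a general continuous self-map need not admit one, and for such maps the equality can fail. An alternative differential-geometric route avoids triangulations altogether: using that $f$ is an isometry, the normal action of $f$ along each fixed component $F_j$ is orthogonal with no eigenvalue $1$, so $\det(I-df|_{\nu F_j})>0$; the clean-intersection version of the Lefschetz fixed point formula then assigns each $F_j$ the local contribution $\mathrm{sign}\det(I-df|_{\nu F_j})\cdot\chi(F_j)=\chi(F_j)$, and summing over $j$ gives $\chi(\mathrm{Fix}(f))$ again. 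The main subtlety on that route is justifying the cleanness hypothesis, which holds because for an isometry one has $T_pF_j=\ker(df_p-\mathrm{id})$.
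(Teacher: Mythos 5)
The paper itself contains no proof of Theorem~\ref{DieckHuang}: it is imported verbatim from the literature, with the references to tom~Dieck~\cite{Dieck79} and Huang~\cite{Huang74} standing in for a proof. So there is no internal argument to compare yours against; what you have done is supply a self-contained proof of the quoted result, and your proof is correct. The combinatorial route is the standard one: averaging a metric to make $f$ a finite-order isometry, invoking Illman's equivariant triangulation theorem, and then using the classical facts that (i) after one barycentric subdivision a simplex carried to itself by a simplicial automorphism is fixed pointwise (your flag argument $\sigma_0<\cdots<\sigma_k$ with dimensions strictly increasing is exactly right, since an invariant simplex has its barycenter fixed), (ii) the Hopf trace formula then shows $\mathrm{tr}(f_\#|_{C_i})$ counts pointwise-fixed $i$-simplices with coefficient $+1$ (a non-invariant simplex has zero diagonal entry), and (iii) the pointwise-fixed simplices form the full subcomplex on the fixed vertices, which carries exactly $\mathrm{Fix}(f)$ --- here your open-simplex argument uses, correctly, that $f$ is a simplicial \emph{automorphism} and so maps open simplices onto open simplices, which is what forces $f(\sigma)=\sigma$. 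This differs in flavor from the cited sources (tom~Dieck works in a more general equivariant/ENR framework, and Huang's method is an equivariant averaging argument rather than a triangulation count), but it proves precisely the stated theorem, and it makes visible where each hypothesis enters: smoothness and periodicity are consumed entirely by the equivariant triangulation theorem, as you note. Your alternative differential-geometric sketch is also sound: for an isometry one has $T_pF_j=\ker(df_p-\mathrm{id})$, so the fixed components are clean, $df|_{\nu F_j}$ is orthogonal without eigenvalue $1$, and the eigenvalue pairs $e^{\pm i\theta}$ contribute $2-2\cos\theta>0$ while $-1$ contributes $2$, giving $\det(I-df|_{\nu F_j})>0$ and local contribution $\chi(F_j)$. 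The only caution is that both routes lean on substantial black boxes (Illman's theorem in one case, the clean Lefschetz fixed point formula in the other), so the citations should be kept explicit; with them in place the proof is complete.
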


 \section{Proofs of Theorem~\ref{main1} and Theorem~\ref{main2}}\label{s4}

 The objective of this section is to give proofs of our main results.
 First, let us prove Theorem~\ref{main1}.

 \vskip .2cm

   \noindent \textit{Proof of Theorem~\ref{main1}.}
   With the assumptions on $M$,
   $M$ must be orientable because $H_1(M,\Z_2) =0$. Suppose that $M$ admits a locally-standard
    $(\Z_2)^3$-action. Let $\pi:
   M\longrightarrow Q$ be the orbit map.
    Borel's Theorem~\ref{Borel} implies that
   the $(\Z_2)^3$-action on $M$ can not be a free action (also see
   ~\cite{Dot81} for more general cases), so by Lemma~\ref{free},
   $Q$ has non-empty boundary and admits a $(\Z_2)^3$-coloring
   $\lambda$. Furthermore,  by Lemma~\ref{L:action2}, each  $\tau \in \text{\rm Im}\lambda$
   is an orientation-reversing
   involution on $M$ such that its
    fixed point set $\text{Fix}(\tau)$ has $2$-dimensional
    components.
   In addition, by  Theorem~\ref{Borel}, we have
   $H_1(\text{Fix}(\tau);\Z_2) =0$, so the $2$-dimensional components in the
   $\text{Fix}(\tau)$ must be $2$-spheres (note that this can also be seen from the Smith theory). Moreover,
   by Theorem~\ref{DieckHuang} and the universal coefficient theorem, $\chi(\text{Fix}(\tau))=2$. This implies
   that $\text{Fix}(\tau)$ contains only one  $2$-dimensional component, which is  exactly one
   $S^2$.
   \vskip .2cm
Let $\langle\tau\rangle$ denote the $\Z_2$ subgroup generated by
$\tau$. Since the $(\Z_2)^3$-action on $M$ is locally standard, the
action of $(\Z_2)^3/\langle\tau\rangle$ on the unique
$2$-dimensional component (i.e., $S^2$) of $\text{Fix}(\tau)$ is
still locally standard. Therefore,  the boundary of $Q$  is
tessellated by
    patches that are orbit spaces of some $S^2$'s with locally standard
    $(\Z_2)^2$-actions.

    \vskip .2cm

    \textbf{Claim 1.} {\em The boundary of $Q$ is tessellated by
    $2$-gons.}

    \vskip .2cm

    In fact, if $\Sigma$ is a closed surface with a locally-standard $(\Z_2)^2$-action,
       its orbit space $F$ is a $2$-manifold with corners and $\partial
       F$ is not empty.  We know from~\cite[Corollary 4.2]{MaLu08}
        that
       \begin{equation}\label{Eq:EulerNumber}
         \chi(\Sigma) = 4 \chi(F) - m,
         \end{equation}
         where $m$ is the number of vertices in $F$ (i.e.,  the number of the points fixed by the
       $(\Z_2)^2$-action).
       \vskip .2cm

       In particular when $\Sigma = S^2$, $\chi(\Sigma) =2$.
       Since $\chi(F)\leq 1$, so $\chi(F)=1$ since $\partial F$ is
       not empty. Then $m=2$ and $F$ is homeomorphic to the 2-disk $D^2$, so $F$ must be a $2$-gon.

       \vskip .2cm

Thus, topologically the boundary $\partial Q$ must consist of some
$2$-spheres, each of which  is covered by
      $2$-gon facets such that each vertex of $Q$ meets exactly $3$ facets. So  each boundary component of $Q$
    is   actually  a union of  exactly three
      2-gons,  as shown in Figure~\ref{p:3-Lens}.
        \begin{figure}
  \includegraphics[width=0.1\textwidth]{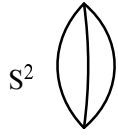}\\
  \caption{$2$-sphere covered by $2$-gons}\label{p:3-Lens}
  \end{figure}

\vskip .2cm
       \textbf{Claim 2.}  {\em $\partial Q$ is connected.}

       \vskip .2cm

       Suppose that $Q$ has $k$ boundary components ($2$-spheres).
      If $k > 1$, then we can do an equivariant cut-and-paste
      operation between $(M, Q, \lambda)$ and $(S^3, \textbf{B},
      \lambda_0)$, which is stated in terms of orbit spaces as
      follows: cut out an  open
          neighborhood $N(C)$ of a boundary component $C$ of $Q$
         and an  open
          neighborhood $N(\partial\textbf{B})$ of the boundary component  of the solid three-edged footbal $\textbf{B}$,
          and then glue them together along
         the new boundaries, as shown in
          Figure~\ref{p:Fill_ball}. \begin{figure}
         \includegraphics[width=0.70\textwidth]{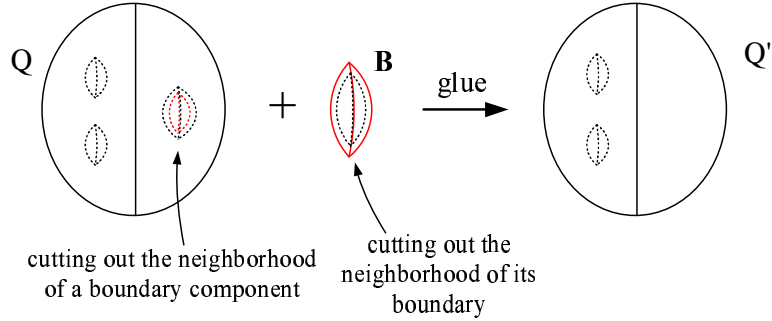}
          \caption{Fill a hole by a $3$-ball}\label{p:Fill_ball}
        \end{figure}
Write $(M', Q',\lambda')=(M, Q,
\lambda)\#_{N(C),N(\partial\textbf{B})}(S^3, \textbf{B},
      \lambda_0)$. On the other hand, this operation is invertible
      with respect to $(M, Q, \lambda)$ and $(M', Q',\lambda')$; in
      other words, we can cut out a small open $3$-ball $B$
         from the interior of $Q'$ and a solid three-edged football $\textbf{B}$ respectively,  and then glue them along
         the new boundaries to get $Q$, as shown in Figure~\ref{p:Glue_Ball}.
         Thus, $(M, Q,\lambda)=(M', Q',\lambda')\#^{-1}_{N(C),N(\partial\textbf{B})}(S^3, \textbf{B},
      \lambda_0)$, where $\#^{-1}_{N(C),N(\partial\textbf{B})}=\#_{B,B}$ is
       the equivariant connected sum along free orbits.  Since $k-1>0$, $\partial Q'$ still contains some $2$-spheres
          as in Figure~\ref{p:Glue_Ball}, so $M'$ is still a connected $3$-manifold.
         We see that $M$ is actually obtained by
          attaching a connected $3$-manifold to $S^3$ by eight disjoint $S^2\times I$ tubes (see
          Figure~\ref{p:Glue_Mfd}). This implies that $\text{rank}H_1(M,\Z)\geq
          7$, which contradicts $H_1(M;\Z_2)=0$. Thus, $k$ must be
          1, so $\partial Q$ is connected.

          \begin{figure}
          \includegraphics[width=0.70\textwidth]{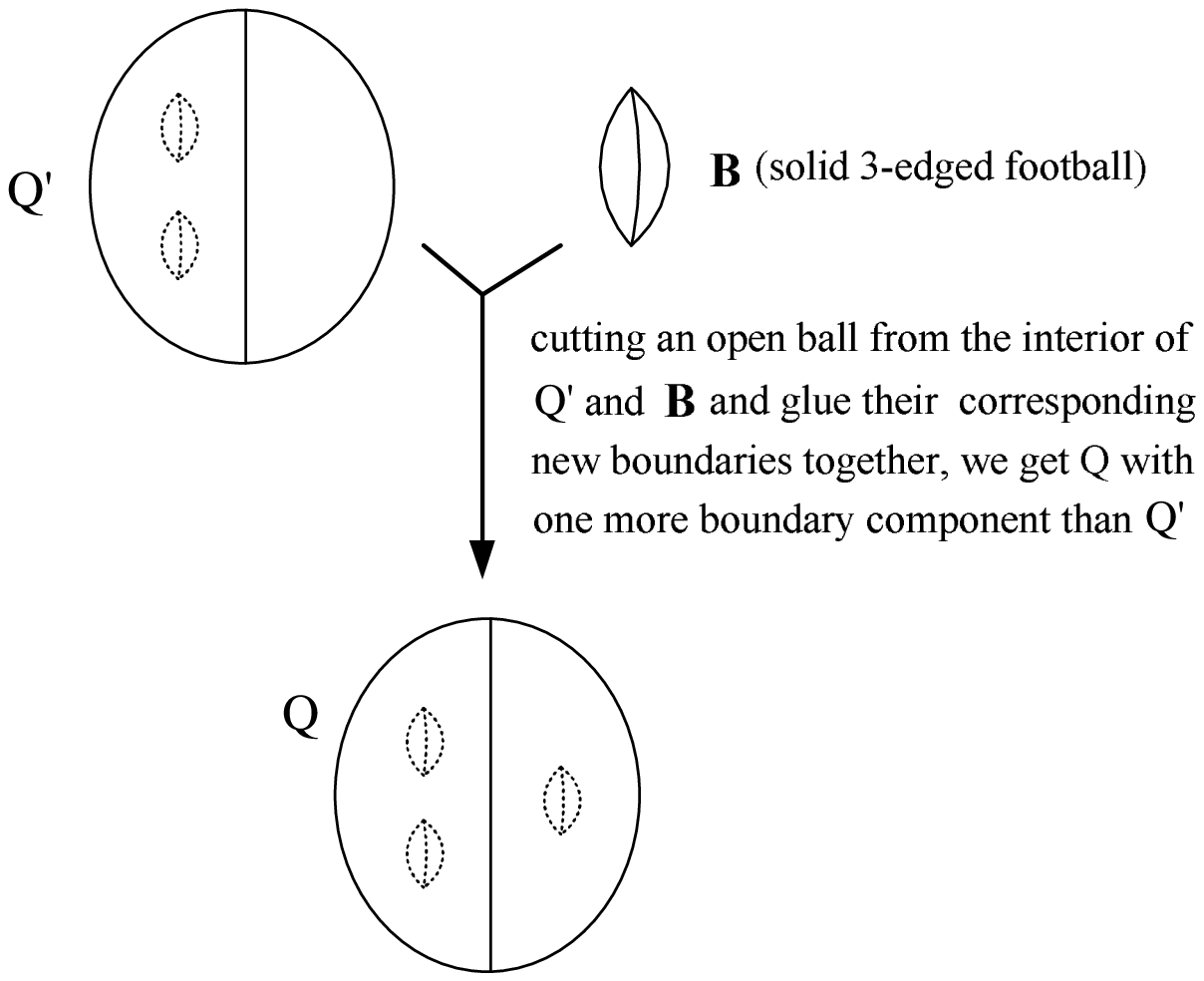}
           \caption{}\label{p:Glue_Ball}
         \end{figure}

           \begin{figure}
         \includegraphics[width=0.43\textwidth]{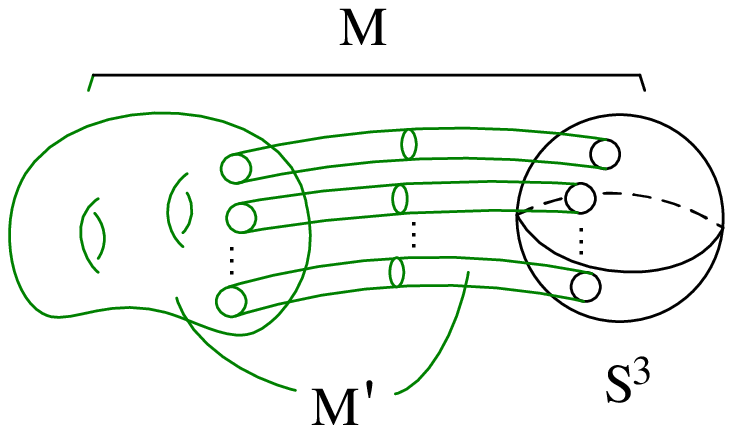}
          \caption{}\label{p:Glue_Mfd}
        \end{figure}

\vskip .2cm

           Now we know that $\partial Q \approx S^2$. We can
           still carry out the  operation $$(M, Q,
\lambda)\#_{N(S^2),N(\partial\textbf{B})}(S^3, \textbf{B},
      \lambda_0)$$ as above, and still denote it by $(M', Q',\lambda')$.  Then it is easy to see that $Q'$  becomes a closed
          manifold!  In this case, all the orbits of the $(\Z_2)^3$-action
          on $M'$ are free orbits. But $M'$ may not be connected. Notice that
          each connected component of $M'$ must be a covering space of $Q'$, whose
          covering cardinality divides $8$. By the equivariance of
          the $(\Z_2)^3$-action, $M'$ must have $1, 2,4$ or $8$ connected components.
         Topologically, the possible relations between $M$ and $M'$ are shown in
         Figure~\ref{p:Glue_Mfd},
         Figure~\ref{p:Glue_Mfd_seimfree} and Figure~\ref{p:Glue_Mfd_free}.
          \begin{figure}
         \includegraphics[width=0.9\textwidth]{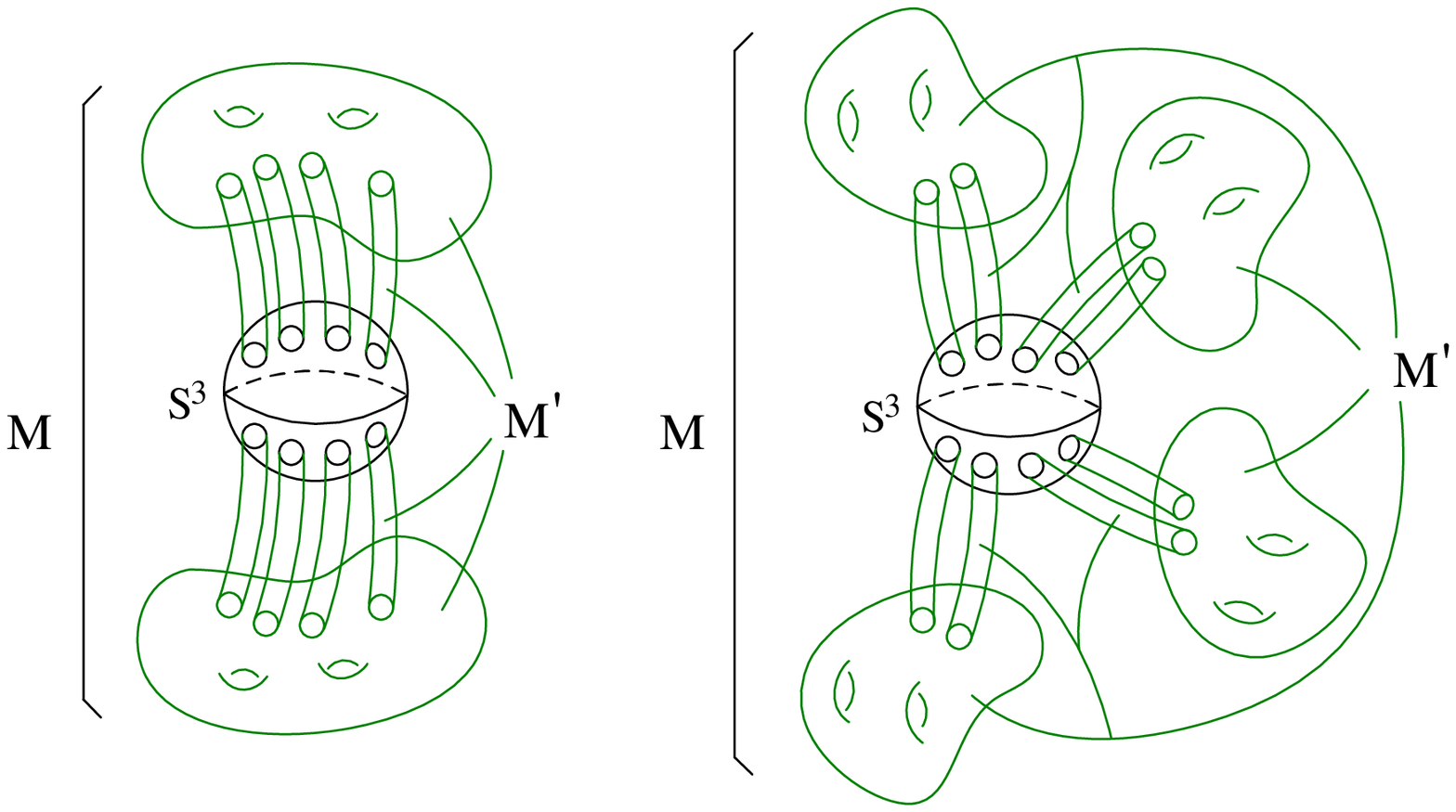}
          \caption{}\label{p:Glue_Mfd_seimfree}
        \end{figure}

            \begin{figure}
         \includegraphics[width=0.5\textwidth]{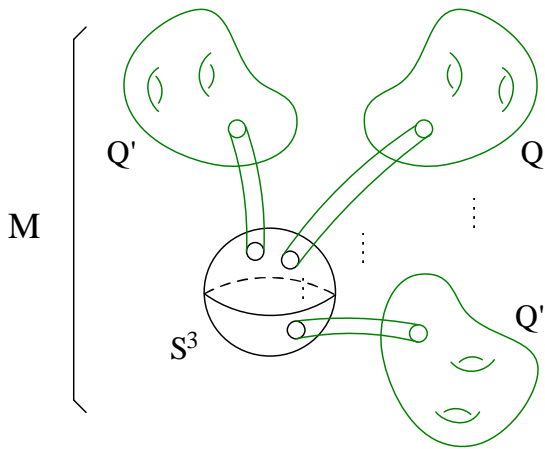}
          \caption{ Eight $Q'$-handles attached to $S^3$ }\label{p:Glue_Mfd_free}
        \end{figure}

        \vskip .2cm

          The cases listed in Figure~\ref{p:Glue_Mfd} and
         Figure~\ref{p:Glue_Mfd_seimfree} are not possible because
         $H_1(M;\Z_2)=0$. So the only possible case is
         Figure~\ref{p:Glue_Mfd_free} where $M$ is the connected sum of
          eight copies of $Q'$ with a $S^3$. This implies that $Q'$ is a $\Z_2$-homology sphere.
          So $M$ is of the form
          $M = \overset{\mathrm{8\ copies}}{\overbrace{N \# N \# \cdots \# N }}
         $, where $N$ is a $\Z_2$-homology sphere. Conversely, a closed $3$-manifold $M$
         of this form obviously admits a locally-standard $(\Z_2)^3$-action.
\vskip .2cm

            If we assume $M$ is irreducible, then $Q'$ in Figure~\ref{p:Glue_Mfd_free} must be
          homeomorphic to $S^3$. So $Q$ is homeomorphic to a
          $3$-ball whose boundary consists of
         $2$-gon facets. Then
          $Q$ must be the solid three-edged football $\textbf{B}$.
        Hence $M$ must be $S^3$ with a locally standard
      $(\Z_2)^3$-action.$\hfill\Box$

\vskip .2cm

Now let us complete the proof of Theorem~\ref{main2}.

\vskip .2cm

\noindent \textit{Proof of Theorem~\ref{main2}.}
     With the conditions on $M$,
if $M$ admits a locally standard
    $(\Z_2)^3$-action  having a fixed point, then the orbit space
    $Q$ of the action has non-empty boundary, so $Q$ admits a
    $(\Z_2)^3$-coloring $\lambda$.
    Furthermore, for each $\tau\in \text{Im}\lambda$, by Lemma~\ref{L:action2},   $\tau$
   is an orientation-reversing
   involution on $M$ such that its
    fixed point set $\text{Fix}(\tau)$ has $2$-dimensional
    components. By Theorem~\ref{Eq_Kobe},
     $$\text{dim}_{\Z_2}H_1(\text{Fix}
     (\tau);\Z_2)\leq\dim_{\Z_2} H_1(M;\Z_2)+\beta_1(M) \leq 1.$$
     So the fixed point set of $\tau$ must be the disjoint union
     of 2-spheres, or real projective planes, or  discrete points.
     Theorem~\ref{DieckHuang} implies that $\text{Fix}(\tau)$ is
     either a $S^2$, a $\R P^2$ plus a point or two points.
     By the equation~\eqref{Eq:EulerNumber}, the orbit space of $\R P^2$
     under a locally-standard $(\Z_2)^2$-action can only be
     a triangle whose vertices are fixed points. So the boundary of $Q$ must consist of
     $2$-spheres, because no other closed surfaces can be covered by $2$-gons and
     $3$-gons with each vertices incident to three different facets (again an easy calculation of the
     Euler number).
     The only possible tessellations of each boundary component of $Q$ in this
     case are (see Figure~\ref{p:2-spheres}):
     \begin{enumerate}
       \item three $2$-gons;
       \item four triangles as the boundary of a regular tetrahedron.

       \begin{figure}
         \includegraphics[width=0.3\textwidth]{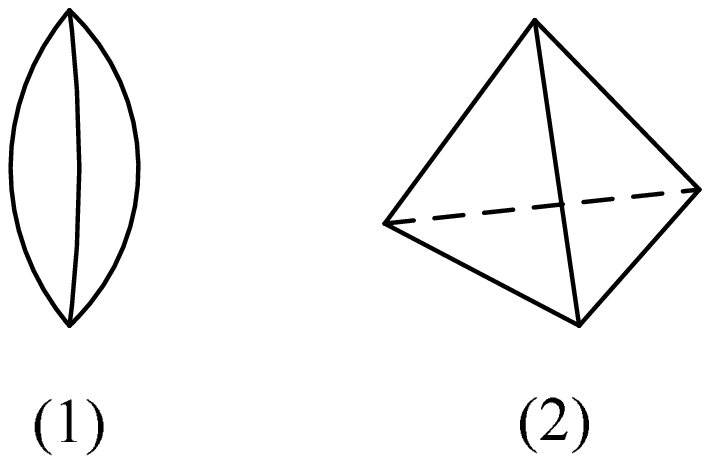}
          \caption{ }\label{p:2-spheres}
        \end{figure}

     \end{enumerate}

      As the orbit spaces of locally-standard $(\Z_2)^3$-actions,
       solid  three-edged  football must correspond to $S^3$ and
      tetrahedron must correspond to $\R P^3$.

      \vskip .2cm

      Assume the orbit space $Q$ has $k$ boundary components.
      Similarly to the proof of Theorem~\ref{main1}, we can show that $k=1$ using
      the condition $H_1(M;\Z_2) \cong \Z_2$.

      \vskip .2cm

       If assuming $M$ is irreducible,
       $Q$ must be a $3$-ball whose boundary is one of the above two cases.
       (The only new ingredient here is
      that the \textquotedblleft $S^3$\textquotedblright\ in the
      Figure~\ref{p:Glue_Mfd}, Figure~\ref{p:Glue_Mfd_seimfree}
      and Figure~\ref{p:Glue_Mfd_free} above could be $S^3$ or $\R P^3$ in this
      case). Considering the homology, $M$ must be $\R P^3$.
      But without the\textquotedblleft irreducibility\textquotedblright
      condition of $M$, $M$ could be homeomorphic to
       \[
        \R P^3 \# \overset{\mathrm{8\ copies}}{\overbrace{N \# N \# \cdots \# N }}
     \]
     where $N$ is a $\Z_2$-homology $3$-sphere.

     Conversely, if $M$ is homeomorphic to the connected sum $\R P^3 \# \overset{\mathrm{8\ copies}}{\overbrace{N \# N \# \cdots \# N
     }}$ with $N$  a $\Z_2$-homology 3-sphere, then obviously $M$
     admits a locally standard $(\Z_2)^3$-action such that the fixed
     point set is non-empty.   $\hfill\Box$

     \vskip .2cm

 \begin{rem}
   General effective $(\Z_2)^n$-actions on cohomology projective
   spaces were investigated in~\cite{Hsiang74}, and some
   general structural results of these actions were obtained there.
 \end{rem}

 \begin{rem} We might be able to remove
 the extra condition in Theorem~\ref{main2} that the locally-standard
 $(\Z_2)^3$-action has a fixed point to arrive at the same
 conclusion. But we must show that there are no free
 $(\Z_2)^3$-actions on a rational homology sphere $M$ with $H_1(M;\Z_2)\cong
 \Z_2$.\\
 \end{rem}

\noindent\textbf{Question:} Is there any free $(\Z_2)^3$-action on a
rational homology $3$-sphere $M$ with $H_1(M;\Z_2)\cong \Z_2$?

\vskip .2cm

 The arguments in above theorems could be extended to investigate
 general $3$-manifolds with locally-standard
 $(\Z_2)^3$-actions. The main difficulty in the general cases
 is that the boundary components of the orbit space of the action might
 have closed surfaces with higher genus, which make this approach
 very complicated.

  \ \\

\end{document}